\newtheorem{theorem}{Theorem}[section]
\newtheorem{lemma}[theorem]{Lemma}
\newtheorem{proposition}[theorem]{Proposition}
\theoremstyle{definition}
\newtheorem{definition}[theorem]{Definition}
\newtheorem{example}[theorem]{Example}
\theoremstyle{remark}
\newtheorem{remark}[theorem]{Remark}
\numberwithin{equation}{section}
\begin{document}

\title[The Radon-Nikod$\acute{Y}$m property of $\mathbb{L}$-Banach spaces]{The Radon-Nikod$\acute{Y}$m property of $\mathbb{L}$-Banach spaces and the dual representation theorem of $\mathbb{L}$-Bochner function spaces}
\author{Xia Zhang}
\address{School of Mathematical Sciences, TianGong University, No. 399 Binshui Road Xiqing District, Tianjin 300387, China}
\email{zhangxia@tiangong.edu.cn}
\thanks{The authors were supported by the National Natural Science Foundation of China Grant \#12171361 and the Humanities and Social Science Foundation of Ministry of Education Grant \#20YJC790174.}

\author{Xiangle Yan}
\address{School of Mathematical Sciences, TianGong University, No. 399 Binshui Road Xiqing District, Tianjin 300387, China}
\email{yanxiangle0610@163.com}

\author{Ming Liu}
\address{School of Mathematical Sciences, TianGong University, No. 399 Binshui Road Xiqing District, Tianjin 300387, China}
\email{liuming@tiangong.edu.cn}

\subjclass[2020]{Primary 46B22, 46B10; Secondary 46E30}

\date{September 8, 2024 and, in revised form, xx xx, 2024.}


\keywords{$\mathbb{L}$-Banach spaces, $\mathbb{L}$-$\mu$-measurable functions, $\mathbb{L}$-Bochner integrable functions, Radon-Nikod$\acute{y}$m property, dual representation theorem}

\begin{abstract}
In this paper, we first introduce $\mathbb{L}$-$\mu$-measurable functions and $\mathbb{L}$-Bochner integrable functions on a finite measure space $(S,\mathcal{F},\mu),$ and give an $\mathbb{L}$-valued analogue of the canonical $L^{p}(\Omega,\mathcal{F},\mu).$ Then we investigate the completeness of such an $\mathbb{L}$-valued analogue and propose the Radon-Nikod$\acute{y}$m property of $\mathbb{L}$-Banach spaces. Meanwhile, an example constructed in this paper shows that there do exist an $\mathbb{L}$-Banach space which fails to possess the Radon-Nikod$\acute{y}$m property. Finally, based on above work, we establish the dual representation theorem of $\mathbb{L}$-Bochner integrable function spaces, which extends and improves the corresponding classical result.
\end{abstract}

\maketitle

\section{Introduction}
The theory of $AW^{*}$-modules, introduced by Kaplansky \cite{ref3} and used to handle problems in $AW^{*}$-algebras slightly extending von Neumann algebras, generalized the theory of Hilbert spaces. An abelian $AW^{*}$-algebra is a $C(Z)$-space where $Z$ is a Stonean space, i.e., a compact Hausdorff space which is extremally disconnected. Kaplansky initiated the study of Kaplansky-Hilbert modules (briefly, $KH$-modules), in which an abelian $AW^{*}$-algebra $\mathbb{A}$ replaces the scalar field $\mathbb{C}$ of complex numbers. Besides, the stochastic theory in Riesz spaces has also been systematically and comprehensively developed, see \cite{ref5} and references therein for details. The central idea of such a theory is to replace the space of integrable functions $L^{1}(\Omega)$ with a vector lattice $\mathbb{E}$ with a weakly ordered identity which is Dedekind complete and allocated with an operator conditional expectation: $T: \mathbb{E}\rightarrow \mathbb{E}$ preserving the weakly ordered identity and ensuring the Dedekind completeness of $R(T).$ In fact, both $R(T)$ and $\mathbb{A}$ are Dedekind complete unital $f$-algebras. In order to unify such two theories and set up a general theory of functional analysis (i.e., $\mathbb{L}$-functional analysis), Kikianty et al. first proposed to replace the scalar field $\mathbb{R}$ (resp. $\mathbb{C}$) with a real (resp. complex) Dedekind complete unital $f$-algebra $\mathbb{L}$ and systematically established some basic and important theories of $\mathbb{L}$-normed spaces and $\mathbb{L}$-inner spaces in \cite{ref1}. For more detailed information about $f$-algebras and lattices, please refer to \cite{ref14,ref20,ref22}.

Moreover, random functional analysis, as another general extension of classical functional analysis, randomly generalized ordinary normed spaces, locally convex spaces, etc., and has been widely applied to conditional risk measures \cite{ref10}. Specially, the concept of a random normed module (an $RN$ module for short), which was first proposed by Guo \cite{ref6}, is a typical generalization of a normed space and plays an important role in random functional analysis. Such a theory aims to randomize the metric to the equivalence classes of non-negative $\mathbb{R}$-valued $\mathcal{F}$-measurable random variables. The main difficulty in the study of $RN$ modules is that the complex hierarchical structures of $RN$ modules make the traditional theory of conjugate spaces less applicable.
The significant breakthrough in the field of $RN$ modules was made by Guo, attributed to his establishment of the Hahn-Banach theorem for an almost everywhere (briefly, a.e.) bounded random linear functional \cite{ref6}. Furthermore, the representation theory of random conjugate spaces and characterizations of the random reflexivity within the framework of random conjugate spaces have yielded some profound results \cite{ref8,ref9}. For example, Guo \cite{ref7} deeply studied the dual representation theorems of Lebesgue-Bochner function spaces, extending fundamental results in the theory of Banach spaces. Subsequently, Zhang extensively studied the theory of specific operator semigroups and established a mean ergodic characterization of a complete $RN$ module in \cite{ref23,ref24,ref25}. Wu, Zeng and Zhao \cite{ref15} promoted the development of $L^{0}$-convex compactness in random locally convex modules, and Wu established the Farkas' lemma and the Bishop-Phelps theorem in fixed random spaces respectively \cite{ref16,ref17}.

The theory of the Radon-Nikod$\acute{y}$m property (RNP for short) of Banach spaces has been occupying a crucial position in the development of Banach spaces. Rieffel \cite{ref28} gave a new hypothesis of the Radon-Nikod$\acute{y}$m theorem for Bochner integrals that is equivalent to the classical case. Andrews \cite{ref26} established the conditions for the space of bounded linear operators to possess the RNP. Subsequently, Cullender and Labuschagne \cite{ref21} characterized Banach spaces with the RNP, extending a classical martingale characterization of the RNP. In this paper, we propose the RNP of $\mathbb{L}$-Banach spaces. This definition is indispensable because it can prove the main theorem of this paper, which is Theorem 3.4, and there do exist an $\mathbb{L}$-Banach space which fails to possess the RNP, namely Example 3.3, which is quite different from the classical case. In the process of proving the dual representation theorem of $\mathbb{L}$-Bochner function spaces, it is a remarkable fact that the value domain of functions in such spaces is an $\mathbb{L}$-Banach space, so the absolute value of functions cannot be simply taken, leading us to explore new methods different from the classical one, which is one of the difficulties of this paper.

Classical dual representation theorems, including the conjugate spaces of $L^{p}(\Omega,\mathcal{A},$ $\mu)(1\leq p\leq+\infty)$ and $C[0,1]$, are of vital significance to functional analysis, and such theorems all involve Lebesgue integrals, measure spaces and the RNP. This paper mainly discusses the dual representation theorem of an $\mathbb{L}$-valued analogue of the canonical $L^{p}(\Omega,\mathcal{A},\mu)$. The symbol $C_{\infty}(Z)$ represents the collection formed by all extended $\mathbb{R}$-valued continuous functions $\varphi$ on $Z$ that is Stonean satisfying that $\varphi$ takes finite values on a dense subset $U_{\varphi}$ of $Z.$ The connection of $\mathbb{L}$, $C(Z)$ and $C_{\infty}(Z)$ can be given as $C(Z)\subseteq\mathbb{L}\subseteq C_{\infty}(Z),$ which is called the representation theory. Many scholars have researched various integrals of functions taking values in abstract spaces and obtained deep developments \cite{ref18,ref27,ref29}. Integrals of Banach-valued functions that have been involved include McShane integrals, Bochner integrals and so on \cite{ref30}. Among them, McShane and Henstock integrals are similar to Riemann integrals, except for slight changes in the partition of the domain, which make handling ill-conditioned functions still difficult. Besides, Pettis and Dunford integrals involve the weak $\mu$-measurability. Motivated by the work of Guo in \cite{ref7,ref9}, we study Bochner integrals and find that the precondition for Banach-valued functions to be Bochner integrable is the strong $\mu$-measurability, which makes it more convenient to study  conjugate spaces. Based on the above considerations, we extend the definitions of $\mu$-measurable functions and Bochner integrable functions in \cite{ref11} to $\mathbb{L}$-functional analysis, and give the corresponding concepts of $\mathbb{L}$-$\mu$-measurable functions and $\mathbb{L}$-Bochner integrable functions on a finite measure space $(S,\mathcal{F},\mu).$ Moreover, we give an $\mathbb{L}$-valued analogue of the canonical $L^{p}(\Omega,\mathcal{F},\mu),$ i.e., $L^{p}(\mu,X),$ investigate the completeness of such a space and further generalize several classical results.

This paper is arranged as following: in Section 2, we briefly recall some important and essential concepts and properties and establish $\mathbb{L}$-Bochner integrals in $\mathbb{L}$-functional analysis; in Section 3, we first give an example to show that not all $\mathbb{L}$-Banach spaces possess the RNP, and then we devote to proving the dual representation theorem of $\mathbb{L}$-Bochner function spaces.

\section{Preliminaries}
Throughout this paper, $\mathbb{L}$ is a fixed (real or complex) Dedekind complete unital $f$-algebra and $\mathbb{L}^{+}=\{\xi\in \mathbb{L}~|~\xi\geq 0 \}$. For the reader's clearness, we will first recall some concepts and facts in $\mathbb{L}$-functional analysis.
\begin{definition}\cite{ref1}
An ordered pair $(X, \|\cdot\|)$ is called an $\mathbb{L}$-normed space if $X$ is an $\mathbb{L}$-vector space and $\|\cdot\|$ is a mapping from $X$ to $\mathbb{L}^{+}$ which satisfies three axioms:
\begin{enumerate}
\item $\|x\|=0$ iff $x=\textbf{0},$ $\forall x\in X,$ where 0 stands the null element in $\mathbb{L}^{+}$ and $\textbf{0}$ the null vector in $X;$

\item $\|\mu x\|=|\mu|\|x\|,$  $\forall\mu\in \mathbb{L}$ and $x\in X$;

\item $\|x+y\|\leq\|x\|+\|y\|,$ $\forall x,y\in X,$

\end{enumerate}
where $\|\cdot\|$ is called an $\mathbb{L}$-norm on $X.$
\end{definition}

In particular, $(\mathbb{L},|\cdot|)$ is an $\mathbb{L}$-normed space, where $|\cdot|$ is the modulus operation on $\mathbb{L}.$

Unlike the classical case, the convergence of elements in an $\mathbb{L}$-normed space is similar to the order convergence of nets in a vector lattice, see \cite{ref2, ref4, ref12} for details.

\begin{definition}\cite{ref1}
Suppose that $(X,\|\cdot\|)$ is an $\mathbb{L}$-normed space and $(x_{\alpha})_{\alpha\in I}$ a net in $X.$ If there is a set $\mathcal{E}\subseteq \mathbb{L}^{+}$ with $\inf\mathcal{E}=0$ (briefly, $\mathcal{E}\downarrow 0$, $\mathcal{E}$ does not need to be a directed set) satisfying that for each $\varepsilon\in\mathcal{E}$ there is an $\alpha_{0}\in I$ satisfying
$\|x_{\alpha}-x\|\leq \varepsilon$ for any $\alpha\geq\alpha_{0},$
then we say the net $(x_{\alpha})_{\alpha\in I}$ converges to $x\in X$ and denote it by $x_{\alpha}\rightarrow x$ or $\lim_{\alpha} x_{\alpha}=x.$

A set $A\subseteq X$ is said to be closed if we can deduce that $x\in A$ for any $x_{\alpha}$ in $A$ satisfying $x_{\alpha}\rightarrow x.$

A function $f:X\rightarrow \mathbb{L}$ is said to be continuous if we can infer that $(f\left(x_{\alpha}\right))_{\alpha\in I}$ converges to $f(x)$ from $(x_{\alpha})_{\alpha\in I}$ converging to $x.$

A net $(x_{\alpha})_{\alpha\in I}$ is said to be Cauchy if there is a set $\mathcal{E}\downarrow 0$ satisfying that for each $\varepsilon\in\mathcal{E}$ there is an $\alpha_{0}\in I$ satisfying
$
\|x_{\alpha}-x_{\beta}\|\leq \varepsilon
$
for any $\alpha,\beta\geq\alpha_{0}.$
If each Cauchy net in $X$ converges, then $X$ is said to be complete or called an $\mathbb{L}$-Banach space.

$X^{*},$ the dual space of $X,$ denotes $B(X,\mathbb{L})$, i.e., the space of all bounded linear operators $\phi:X\rightarrow\mathbb{L}.$ It is noteworthy that $X^{*}$ is an $\mathbb{L}$-Banach space for every $\mathbb{L}$-normed space $X.$
\end{definition}

Throughout the remainder of this paper, $(X, \|\cdot\|)$ always denotes an $\mathbb{L}$-Banach space, $N$ the set of positive integers, $S$ a nonempty set, $\mathcal{F}$ a $\sigma$-algebra on $S$ and $\mu$ a finite measure on the measurable space $(S,\mathcal{F}).$

In the classical case, assume that $(\Omega,\mathcal{A},\mu)$ is a measure space, then
\[
L^{p}(\Omega,\mathcal{A},\mu)=\left\{f \ is \ measurable \ on \ \Omega ~|~ \|f\|_{p}=\left(\int_{\Omega}|f|^{p}d\mu\right)^{\frac{1}{p}}<+\infty\right\}.
\]
Therefore, in order to define an $\mathbb{L}$-valued analogue of the above space in $\mathbb{L}$-functional analysis, it is essential and difficult to construct the concepts of $\mathbb{L}$-$\mu$-measurable functions and $\mathbb{L}$-Bochner integrable functions. The definitions presented below generalize the corresponding notions in \cite{ref11}.

\begin{definition}
For a given finite measure space $(S,\mathcal{F},\mu),$ $I_{F}$ represents the characteristic function of $F\in\mathcal{F}.$ In the following of this definition, we always assume that g is a function from $S$ to $X.$

$g$ is said to be $\mathbb{L}$-simple, if there are $x_{1},x_{2},\cdots x_{n}\in X$ and mutually disjoint $S_{1},S_{2},$ $\cdots S_{n}\in S$ satisfying $g=\sum\limits_{j=1}\limits^{n}x_{j}I_{S_{j}},$ where $n\in N$.

$g$ is said to be $\mathbb{L}$-strongly $\mu$-measurable (briefly, $\mathbb{L}$-$\mu$-measurable), if there is a sequence of $\mathbb{L}$-simple functions $(g_{n})_{n\in N}$ satisfying
$
\lim_{n}\left\|g_{n}-g\right\|=0,
$
$\mu$-a.e., where 0 stands the zero element of $\mathbb{L}.$

An $\mathbb{L}$-$\mu$-measurable function $g$ is said to be $\mathbb{L}$-Bochner integrable, if there is a sequence of $\mathbb{L}$-simple functions $(g_{n})_{n\in N}$ satisfying
$
\lim_{n}\int_{S}\|g_{n}$ $-g\|d\mu=0.
$
The $\mathbb{L}$-Bochner integral $\int_{S}gd\mu$ is defined by
$
\int_{S}gd\mu=\lim_{n}\int_{S}g_{n}d\mu,
$
where $\int_{S}g_{n}d\mu$ of $\mathbb{L}$-simple functions $g_{n}=\sum\limits_{j=1}\limits^{k^{(n)}}x_{j}^{(n)}I_{S_{j}^{(n)}}$ is defined by
$
\int_{S}g_{n}d\mu=\sum\limits_{j=1}\limits^{k^{(n)}}x_{j}^{(n)}\mu\left(S_{j}^{(n)}\right).
$
\end{definition}

The classical $H\ddot{o}lder$ inequality and $Minkowski$ inequality \cite{ref19} play an important role in functional analysis. The following versions of these two inequalities in $\mathbb{L}$-functional analysis are presented in \cite{ref1}.
\begin{proposition}[$H\ddot{o}lder$ inequality]

Suppose $1\leq p,q\leq+\infty$ are a pair of conjugate numbers (i.e.,$\frac{1}{p}+\frac{1}{q}=1,$ and $\frac{1}{\infty}=0$ is universally acknowledged), then for any $u,v:S\rightarrow X$, we get
\[
\int_{S}\|u(t)v(t)\|d\mu(t)\leq\left(\int_{S}\|u(t)\|^{p}d\mu(t)\right)^{\frac{1}{p}}\left(\int_{S}\|v(t)\|^{q}d\mu(t)\right)^{\frac{1}{q}}.
\]
\end{proposition}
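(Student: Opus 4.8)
The plan is to reduce the $\mathbb{L}$-valued Hölder inequality to the scalar Hölder inequality already available in $\mathbb{L}$-functional analysis for $\mathbb{L}^{+}$-valued functions. First I would set $\varphi(t)=\|u(t)\|$ and $\psi(t)=\|v(t)\|$, two functions from $S$ into $\mathbb{L}^{+}$. Since $\|\cdot\|$ is submultiplicative-compatible with the product structure on $X$ only through the $f$-algebra action — more precisely, using axiom (2) of Definition 2.1 together with the $\mathbb{L}$-module structure on $X$ — one has the pointwise estimate $\|u(t)v(t)\|\leq \|u(t)\|\,\|v(t)\| = \varphi(t)\psi(t)$ in $\mathbb{L}^{+}$, where the product on the right is the $f$-algebra multiplication. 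Integrating (monotonicity of the $\mathbb{L}$-Bochner integral, which follows from Definition 2.4 applied to $\mathbb{L}^{+}$-valued simple functions and the fact that $\mu(S_j)\geq 0$) gives
\[
\int_{S}\|u(t)v(t)\|\,d\mu(t)\leq \int_{S}\varphi(t)\psi(t)\,d\mu(t).
\]
So the whole problem is the scalar statement: for $\varphi,\psi\colon S\to\mathbb{L}^{+}$ and conjugate exponents $p,q$,
\[
\int_{S}\varphi\psi\,d\mu\leq\Bigl(\int_{S}\varphi^{p}\,d\mu\Bigr)^{1/p}\Bigl(\int_{S}\psi^{q}\,d\mu\Bigr)^{1/q}.
\]

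Next I would prove this scalar inequality by the classical Young-inequality route, carried out inside the Dedekind complete unital $f$-algebra $\mathbb{L}$. Because $\mathbb{L}$ is Dedekind complete, every element of $\mathbb{L}^{+}$ has arbitrary positive real powers (functional calculus / the representation $C(Z)\subseteq\mathbb{L}\subseteq C_\infty(Z)$ makes $\varphi^{p}$, $(\int\varphi^{p})^{1/p}$ etc. meaningful), and the pointwise Young inequality $ab\leq \frac{a^p}{p}+\frac{b^q}{q}$ holds in $\mathbb{L}^{+}$ because it holds in each fibre of $C_\infty(Z)$, i.e. it is just the real Young inequality applied pointwise on the dense set where the functions are finite. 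Put $A=(\int_S\varphi^p\,d\mu)^{1/p}$ and $B=(\int_S\psi^q\,d\mu)^{1/q}$. On the band where $A$ and $B$ are both invertible (strictly positive), apply Young pointwise to $a=\varphi/A$, $b=\psi/B$, integrate, use $\int(\varphi/A)^p=1$ and $\int(\psi/B)^q=1$ on that band, and multiply through by $AB$; this yields the inequality there. On the complementary band, where $A$ (say) has a nontrivial null part $e\in\mathbb{L}$, one checks $e\varphi=0$ $\mu$-a.e., hence $e\varphi\psi=0$ $\mu$-a.e., so both sides are annihilated by $e$ and the inequality is trivially an equality on that band. Gluing the two bands (legitimate by Dedekind completeness — $\mathbb{L}$ has enough idempotents, or one uses the $C_\infty(Z)$ picture directly) gives the full scalar inequality. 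The case $p=1,q=\infty$ (or vice versa) is handled separately and more easily: $\int\varphi\psi\,d\mu\leq(\operatorname{ess\,sup}\psi)\int\varphi\,d\mu$, where $\operatorname{ess\,sup}$ is the $\mathbb{L}$-valued essential supremum, again reading off the estimate pointwise in $C_\infty(Z)$.

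The main obstacle I anticipate is not the algebra of the Young-inequality argument but the measurability and integrability bookkeeping: one must know that $\varphi^{p}$ is $\mathbb{L}$-$\mu$-measurable when $\varphi$ is (so that $\int_S\varphi^p\,d\mu$ makes sense), that the order-limit defining the $\mathbb{L}$-Bochner integral is monotone and behaves well under the pointwise inequalities above, and that the division $\varphi/A$ is legitimate after passing to the band where $A$ is invertible. All of these are consequences of the representation $C(Z)\subseteq\mathbb{L}\subseteq C_\infty(Z)$ — which reduces pointwise-in-$Z$ reasoning to ordinary real analysis — together with the Dedekind completeness of $\mathbb{L}$ guaranteeing that suprema of the relevant sequences of $\mathbb{L}$-simple integrands exist in $\mathbb{L}$. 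I would state the band-decomposition step carefully, since this is exactly the place where the $\mathbb{L}$-valued setting departs from the scalar one: the "denominator" $A$ need not be uniformly bounded away from $0$ or identically $0$, but may vanish on a proper band, and the argument must be split accordingly rather than invoking a single normalization.
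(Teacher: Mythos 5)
The paper itself gives no proof of this proposition: it is quoted from \cite{ref1} (``The following versions of these two inequalities in $\mathbb{L}$-functional analysis are presented in \cite{ref1}''), so there is no internal argument to compare yours against line by line. Judged on its own, your outline is the right one and matches the standard route: reduce to a scalar H\"older inequality for $\mathbb{L}^{+}$-valued functions, prove that by Young's inequality read off pointwise through the representation $C(Z)\subseteq\mathbb{L}\subseteq C_{\infty}(Z)$, normalize by $A=(\int\varphi^{p})^{1/p}$ and $B=(\int\psi^{q})^{1/q}$ on the band where these are invertible, and handle the band where they vanish separately. Your insistence on the band decomposition is exactly the point where the $\mathbb{L}$-valued setting genuinely differs from the real case, and your treatment of it (the null component of $A$ annihilates $\varphi$ $\mu$-a.e., hence both sides) is correct.

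The one step I would not let pass as written is the very first reduction. You derive the pointwise bound $\|u(t)v(t)\|\leq\|u(t)\|\,\|v(t)\|$ from ``axiom (2) of Definition 2.1 together with the $\mathbb{L}$-module structure,'' but axiom (2) only governs the action of a scalar $\mu\in\mathbb{L}$ on a vector $x\in X$; for two elements of a general $\mathbb{L}$-Banach space $X$ no product $u(t)v(t)$ is defined at all, so there is nothing for the inequality to apply to. This is really a defect of the proposition as stated in the paper, but a complete proof has to confront it: the estimate must be verified separately in each sense in which the product is actually used later, namely $X=\mathbb{L}$ with the $f$-algebra multiplication (where $|ab|\leq|a|\,|b|$ holds in any $f$-algebra), and the dual pairing $u(t)v(t)=\langle u(t),v(t)\rangle=v(t)(u(t))$ with $v(t)\in X^{*}$ (where $|v(t)(u(t))|\leq\|v(t)\|_{X^{*}}\|u(t)\|$ is the definition of the dual norm, as used in Lemmas 3.5 and 3.6). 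With that caveat made explicit, and the measurability of $\varphi^{p}$ and monotonicity of the $\mathbb{L}$-Bochner integral established as you indicate, the argument goes through.
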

\begin{proposition}[$Minkowski$ inequality]

Suppose $1\leq p<+\infty,$ then for any $u,v:S\rightarrow X$, we get
\[
\left(\int_{S}\|u(t)+v(t)\|^{p}d\mu(t)\right)^{\frac{1}{p}}\leq\left(\int_{S}\|u(t)\|^{p}d\mu(t)\right)^{\frac{1}{p}}+\left(\int_{S}\|v(t)\|^{p}d\mu(t)\right)^{\frac{1}{p}}.
\]
\end{proposition}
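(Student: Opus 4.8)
The plan is to reproduce, inside $\mathbb{L}$-functional analysis, the classical route from the H\"older inequality to the Minkowski inequality, the only essential new point being that $\mathbb{L}$ is not a field, so one cannot literally "divide through." The case $p=1$ is immediate: axiom (3) of an $\mathbb{L}$-norm gives $\|u(t)+v(t)\|\le\|u(t)\|+\|v(t)\|$ for every $t\in S$, and integrating this $\mathbb{L}^{+}$-valued inequality — using positivity and additivity of the $\mathbb{L}$-valued integral — yields the assertion. So assume $1<p<+\infty$, let $q$ be the conjugate exponent, and set $\eta:=\int_{S}\|u+v\|^{p}\,d\mu$ and $R:=\bigl(\int_{S}\|u\|^{p}\,d\mu\bigr)^{1/p}+\bigl(\int_{S}\|v\|^{p}\,d\mu\bigr)^{1/p}$; after the harmless reduction of multiplying by the characteristic function of the clopen part of the underlying Stonean space on which $R$ is finite (the inequality being trivial in the extended order on the complement) we may take $\eta,R\in\mathbb{L}^{+}$. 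For each $t\in S$, the triangle inequality together with monotonicity and distributivity of multiplication on the $f$-algebra $\mathbb{L}$ gives
\[
\|u(t)+v(t)\|^{p}=\|u(t)+v(t)\|\cdot\|u(t)+v(t)\|^{p-1}\le\|u(t)\|\,\|u(t)+v(t)\|^{p-1}+\|v(t)\|\,\|u(t)+v(t)\|^{p-1}.
\]
Integrating over $S$ and applying the $\mathbb{L}$-valued H\"older inequality to each summand (with $\mathbb{L}$ in the role of $X$, the conjugate pair $q$ and $p$, and $(p-1)q=p$) yields
\[
\eta\le\Bigl(\int_{S}\|u\|^{p}\,d\mu\Bigr)^{1/p}\eta^{1/q}+\Bigl(\int_{S}\|v\|^{p}\,d\mu\Bigr)^{1/p}\eta^{1/q}=R\,\eta^{1/q}.
\]

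It remains to pass from $\eta^{1/p}\eta^{1/q}\le R\,\eta^{1/q}$ (note $\eta=\eta^{1/p}\eta^{1/q}$) to $\eta^{1/p}\le R$, i.e.\ to cancel the factor $\eta^{1/q}$, and this is the one step with no classical analogue and where I expect the real work to lie. I would carry it out via the representation $C(Z)\subseteq\mathbb{L}\subseteq C_{\infty}(Z)$ of \cite{ref1}: on the dense subset $U\subseteq Z$ where $\eta$ and $R$ take finite values one checks the inequality pointwise — at $z\in U$ with $\eta(z)>0$ one has $\eta^{1/q}(z)>0$ and may cancel to get $\eta^{1/p}(z)\le R(z)$, while at $z\in U$ with $\eta(z)=0$ one has $\eta^{1/p}(z)=0\le R(z)$ — and continuity then propagates $\eta^{1/p}\le R$ to all of $Z$, hence in $\mathbb{L}$. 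Equivalently one can argue intrinsically: since $\mathbb{L}$ is a Dedekind complete, hence semiprime, unital $f$-algebra, the element $w:=(\eta^{1/p}-R)^{+}$ satisfies $0\le w\le\eta^{1/p}$, so $w$ lies in the band generated by $\eta^{1/p}$, which is the band generated by $\eta$ and hence also that generated by $\eta^{1/q}$; and since multiplication by the positive element $\eta^{1/q}$ commutes with $(\cdot)^{+}$, we get $w\,\eta^{1/q}=\bigl((\eta^{1/p}-R)\,\eta^{1/q}\bigr)^{+}=\bigl(\eta-R\,\eta^{1/q}\bigr)^{+}=0$, whence $w\perp\eta^{1/q}$; being simultaneously in the band of $\eta^{1/q}$ and disjoint from it, $w=0$, that is, $\eta^{1/p}\le R$, which is the desired inequality.

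The remaining ingredients are routine and I would only verify them briefly: the $\mathbb{L}$-$\mu$-measurability (resp.\ integrability) of $\|u\|$, $\|v\|$, $\|u+v\|$ and their relevant powers, so that the integrals above are legitimate and H\"older applies, which reduces to the measurability of $u$ and $v$ understood in the hypothesis; the existence in $\mathbb{L}$ of powers $\xi^{r}$ ($r>0$) of $\xi\in\mathbb{L}^{+}$ together with the identities $\xi\cdot\xi^{p-1}=\xi^{p}$, $(\xi^{p-1})^{q}=\xi^{p}$ and $\eta^{1/p}\eta^{1/q}=\eta$ used above, which follows from the uniform completeness of $\mathbb{L}$ (again visible through its realization in $C_{\infty}(Z)$); and the positivity, additivity and monotonicity of the $\mathbb{L}$-valued integral invoked throughout. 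Combining these with the two displayed estimates and the cancellation step completes the proof of the $\mathbb{L}$-valued Minkowski inequality.
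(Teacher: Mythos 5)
Your proof is essentially correct, but note that the paper itself does not prove Proposition 2.5 at all: both the H\"older and Minkowski inequalities are simply quoted from \cite{ref1} (``are presented in \cite{ref1}''), so there is no internal argument to compare yours against. What you supply is the standard H\"older-to-Minkowski derivation transplanted to the $f$-algebra setting, and the one genuinely non-classical step --- cancelling the factor $\eta^{1/q}$ in $\eta^{1/p}\eta^{1/q}\le R\,\eta^{1/q}$ --- is exactly where the work lies; both of your justifications for it are sound. The pointwise argument via $C(Z)\subseteq\mathbb{L}\subseteq C_{\infty}(Z)$ is the quickest, while the intrinsic band argument is the more robust one: $0\le w:=(\eta^{1/p}-R)^{+}\le\eta^{1/p}$ places $w$ in the band generated by $\eta^{1/p}$, which coincides with that generated by $\eta^{1/q}$ (disjointness from $\xi$ and from $\xi^{r}$ agree for $\xi\in\mathbb{L}^{+}$, $r>0$), multiplication by the positive element $\eta^{1/q}$ is a lattice homomorphism on the $f$-algebra $\mathbb{L}$ so $w\,\eta^{1/q}=(\eta-R\,\eta^{1/q})^{+}=0$, and semiprimeness turns this into $w\perp\eta^{1/q}$, forcing $w=0$. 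Two small points you should make explicit if this were written up: the existence of the fractional powers $\xi^{r}$ and the identities $\xi^{1/p}\xi^{1/q}=\xi$, $(\xi^{p-1})^{q}=\xi^{p}$ do require uniform completeness of $\mathbb{L}$ (which Dedekind completeness supplies), and the paper's H\"older inequality is stated for $X$-valued maps where the product $u(t)v(t)$ is not even defined, so you are right to apply it only in the scalar case $X=\mathbb{L}$ with $\|u\|$ and $\|u+v\|^{p-1}$ as the two factors. With those caveats your argument is complete and is in fact more informative than what the paper offers.
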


For $1\leq p<\infty,$ the symbol  $L^{p}(S,\mathcal{F},\mu,X)$ (briefly, $L^{p}(\mu,X)$) represents the collection of mappings $u:S\rightarrow X$ satisfying $u$ is $\mathbb{L}$-Bochner integrable such that $$\|u\|_{p}:=\left(\int_{S}\|u(t)\|^{p}d\mu(t)\right)^{\frac{1}{p}}$$ exists in $\mathbb{L}.$ Moreover, such mappings can be represented as
$$
\|u\|_{p}=\left(\sup_{E\in\mathcal{F}}\int_{E}\|u(t)\|^{p}d\mu(t)\right)^{\frac{1}{p}}
$$
as the net of integrals on finite subsets $\left(\int_{E}\|u(t)\|^{p}d\mu(t)\right)_{E\in\mathcal{F}}$  is increasing and bounded by $\|u\|_{p}^{p}.$

Clearly, $\|\cdot\|_{p}$ satisfies (1) and (2) in Definition 2.1, while (3) can be obtained from Proposition 2.5. Thus we deduce that $L^{p}(\mu,X)$ is an $\mathbb{L}$-normed space equipped with the $\mathbb{L}$-norm $\|\cdot\|_{p}$.

Specially, when $p=\infty,$ $L^{\infty}(S,\mathcal{F},\mu,X)$ (briefly, $L^{\infty}(\mu,X)$) denotes the collection of mappings $u:S\rightarrow X$ satisfying $u$ is $\mathbb{L}$-Bochner integrable satisfying
\[
\|u\|_{\infty}:=ess\sup\|u\|
\]
exists in $\mathbb{L}.$

Next, we will give Lebesgue's dominated convergence theorem in $\mathbb{L}$-functional analysis.
\begin{theorem}

For a given finite measure space $(S,\mathcal{F},\mu),$ assume that $(g_{n})_{n\in N}$ is a sequence of $\mathbb{L}$-Bochner integrable functions on $S$ whose value domain is an $\mathbb{L}$-Banach space, and
$
g_{n}\stackrel{a.e.}{\longrightarrow}g \ or \ g_{n}\stackrel{\mu}{\longrightarrow}g.
$
If there is an $\mathbb{L}^{+}$-valued $\mathbb{L}$-Bochner integrable function $h$ satisfying $\|g_{n}\|\leq h,$ $\mu$-a.e. for any $n\in N$ and there is an extended $\mathbb{R}$-valued Lebesgue integrable function $\varphi$ satisfying $h(t)\leq\varphi,$ a.e. for $t\in S,$ then $g$ is $\mathbb{L}$-Bochner integrable and
$
\lim_{n}\int_{S}g_{n}d\mu=\int_{S}gd\mu.
$
\end{theorem}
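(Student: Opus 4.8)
The plan is to transplant the classical proof of the dominated convergence theorem, pushing all of the work onto an $\mathbb{L}^{+}$-valued estimate and then, through the representation $C(Z)\subseteq\mathbb{L}\subseteq C_{\infty}(Z)$, onto the ordinary dominated convergence theorem on $(S,\mathcal{F},\mu)$ with the honest $\mathbb{R}$-valued majorant $\varphi$. First I would reduce to the case $g_{n}\to g$ $\mu$-a.e. If instead $g_{n}\to g$ in measure, I invoke the subsequence principle: to prove $\lim_{n}\int_{S}g_{n}\,d\mu=\int_{S}g\,d\mu$ it suffices that every subsequence of $(g_{n})_{n\in N}$ possess a further subsequence $(g_{n_{k}})_{k\in N}$ with $\int_{S}g_{n_{k}}\,d\mu\to\int_{S}g\,d\mu$; and since convergence in measure passes to subsequences and every sequence converging in measure admits an a.e.\ convergent further subsequence (the $\mathbb{L}$-valued Riesz theorem, proved as in the classical case by choosing indices so that the exceptional sets have summable measure), the general statement follows from the a.e.\ one.

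So assume $g_{n}\to g$ $\mu$-a.e. A routine diagonal argument over $\mathbb{L}$-simple approximants of the $g_{n}$ shows that $g$ is $\mathbb{L}$-$\mu$-measurable. From the reverse triangle inequality $\bigl|\,\|g_{n}\|-\|g\|\,\bigr|\le\|g_{n}-g\|$ one gets $\|g_{n}\|\to\|g\|$ $\mu$-a.e.\ in $\mathbb{L}$, and since $\mathbb{L}$-limits respect the order we obtain $\|g\|\le h\le\varphi$ $\mu$-a.e.; hence the net $\bigl(\int_{E}\|g\|\,d\mu\bigr)_{E\in\mathcal{F}}$ is bounded above by $\int_{S}h\,d\mu$, so $\int_{S}\|g\|\,d\mu$ exists in $\mathbb{L}$, and the $\mathbb{L}$-analogue of Bochner's integrability criterion (an $\mathbb{L}$-$\mu$-measurable function is $\mathbb{L}$-Bochner integrable iff the $\mathbb{L}$-integral of its norm exists) shows $g$ is $\mathbb{L}$-Bochner integrable. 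Using linearity of the $\mathbb{L}$-Bochner integral together with the basic inequality $\bigl\|\int_{S}f\,d\mu\bigr\|\le\int_{S}\|f\|\,d\mu$,
\[
\Bigl\|\int_{S}g_{n}\,d\mu-\int_{S}g\,d\mu\Bigr\|\le\int_{S}\|g_{n}-g\|\,d\mu,
\]
so, putting $\psi_{n}:=\|g_{n}-g\|$, which satisfies $0\le\psi_{n}\le 2h\le 2\varphi$ $\mu$-a.e.\ and $\psi_{n}\to 0$ $\mu$-a.e.\ in $\mathbb{L}$, it remains only to prove $\int_{S}\psi_{n}\,d\mu\to 0$ in $\mathbb{L}$.

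For that last point I set $r_{n}:=\sup_{k\ge n}\psi_{k}$. Each pointwise supremum is bounded above by $2h(t)\in\mathbb{L}^{+}$, so by Dedekind completeness of $\mathbb{L}$ the function $r_{n}$ is well defined and $\mathbb{L}^{+}$-valued, it is $\mathbb{L}$-$\mu$-measurable (a countable supremum of such), it is $\mathbb{L}$-Bochner integrable by Bochner's criterion ($r_{n}\le 2h$), and $r_{n}\downarrow$ with $r_{n}\to 0$ $\mu$-a.e. Since $\psi_{n}\le r_{n}$ we have $0\le\int_{S}\psi_{n}\,d\mu\le\int_{S}r_{n}\,d\mu$, with $\bigl(\int_{S}r_{n}\,d\mu\bigr)_{n\in N}$ decreasing in $\mathbb{L}^{+}$, so everything reduces to showing $\inf_{n}\int_{S}r_{n}\,d\mu=0$, that is, to an $\mathbb{L}$-analogue of the monotone convergence theorem for $r_{n}\downarrow 0$. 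To establish it I would pass to the representation $\mathbb{L}\subseteq C_{\infty}(Z)$: for $z$ in a dense subset of the Stonean space $Z$, the map $t\mapsto r_{n}(t)(z)$ is $\mu$-measurable, dominated by $2\varphi(t)$, and decreasing to $0$ $\mu$-a.e., so the classical dominated convergence theorem with majorant $2\varphi\in L^{1}(S,\mathcal{F},\mu)$ yields $\int_{S}r_{n}(t)(z)\,d\mu(t)\downarrow 0$; since evaluation at $z$ commutes with the $\mathbb{L}$-Bochner integral (a Fubini-type identity that is immediate for $\mathbb{L}$-simple functions and passes to the limit), this means $\bigl(\int_{S}r_{n}\,d\mu\bigr)(z)\downarrow 0$ on a dense subset of $Z$, and because on a Stonean space the infimum in $C_{\infty}(Z)$ of a decreasing sequence coincides with its pointwise infimum on a dense set, we conclude $\inf_{n}\int_{S}r_{n}\,d\mu=0$. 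This closes the argument.

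I expect this last step to be the real obstacle. Unlike the classical situation one cannot ``take absolute values'' to get a scalar comparison, and the dominating function $h$ is only $\mathbb{L}^{+}$-valued, so there is nothing living purely inside $\mathbb{L}$ on which to run a direct limiting argument; the two devices that make it work are the order envelope $r_{n}=\sup_{k\ge n}\psi_{k}$, whose very existence rests on Dedekind completeness of $\mathbb{L}$ together with domination by $h$, and the hypothesis $h\le\varphi$ with $\varphi$ an honest $\mathbb{R}$-valued Lebesgue integrable function, which is exactly what allows the descent to the classical dominated convergence theorem through $C(Z)\subseteq\mathbb{L}\subseteq C_{\infty}(Z)$. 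Without a genuine real integrable majorant that reduction is unavailable, which is why the theorem is stated with the auxiliary $\varphi$ rather than with $h$ alone.
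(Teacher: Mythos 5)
Your proposal is correct in outline and shares the paper's skeleton: the reduction from convergence in measure to a.e.\ convergence via the subsequence principle is exactly the paper's second half, and the pointwise bound $\|g_{n}(t)-g(t)\|\leq 2h(t)\leq 2\varphi$ followed by an appeal to the classical dominated convergence theorem is exactly the paper's main step. Where you genuinely diverge is in how that appeal is justified. The paper disposes of it in one line by asserting that $\|g_{n}\|$ and $\|g\|$ are non-negative $\mathbb{R}$-valued measurable functions (deduced from the simple approximants) and then ``applying the classical dominated theorem to $\|g_{n}(t)-g(t)\|$''; this glosses over the fact that $\|g_{n}-g\|$ is $\mathbb{L}$-valued, and it is the weakest point of the paper's argument. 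You instead face the difficulty head-on: you build the monotone envelope $r_{n}=\sup_{k\geq n}\|g_{k}-g\|$ (using Dedekind completeness and the bound by $2h$), reduce to $\inf_{n}\int_{S}r_{n}\,d\mu=0$, and descend to the scalar theorem by evaluating at points of a dense subset of the Stonean space $Z$ through $\mathbb{L}\subseteq C_{\infty}(Z)$. This buys rigor and makes explicit why the real majorant $\varphi$ (and not merely $h$) is needed, at the cost of two facts you should verify: first, the order supremum $\sup_{k\geq n}\psi_{k}(t)$ in $\mathbb{L}\subseteq C_{\infty}(Z)$ is only the upper semicontinuous regularization of the pointwise supremum, agreeing with it on a dense set that depends on $t$, so the $\mu$-measurability of $t\mapsto r_{n}(t)(z)$ for fixed $z$ is not automatic and needs an argument; second, you invoke an $\mathbb{L}$-analogue of Bochner's integrability criterion (measurable with integrable norm implies Bochner integrable) that the paper never proves --- the paper's own route to the integrability of $g$ is more elementary, namely a diagonal argument producing $\mathbb{L}$-simple functions approximating $g$ in the $L^{1}$ seminorm once $\int_{S}\|g_{n}-g\|\,d\mu\to 0$ is known, and you could adopt it to avoid the unproved criterion.
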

\begin{proof}
Since $(g_{n})_{n\in N}$ is a sequence of $\mathbb{L}$-Bochner integrable functions, it follows that $\|g_{n}\|\in \mathbb{L}^{+}$ and $\|g\|\in \mathbb{L}^{+}$ are $\mathbb{L}$-$\mu$-measurable functions. We can obtain that there are $l(n)~(depending~ on~ n)$ sequences of $\mathbb{L}$-simple functions $(\widetilde{g}^{l(n)}_{n})_{l(n)\in N}$ for each $n\in N$ satisfying
$
\lim_{l(n)}\left\|\widetilde{g}^{l(n)}_{n}-g_{n}\right\|=0,
$
$\mu$-a.e., and then,
$
\lim_{l(n)}\left|\|\widetilde{g}^{l(n)}_{n}\|-\left\|g_{n}\right\|\right|=0,
$
$\mu$-a.e.. Obviously, $\left|\|\widetilde{g}^{l(n)}_{n}\|\right|$ is a non-negative $\mathbb{R}$-valued measurable function, it follows that $\|g_{n}\|$ and $\|g\|$ are non-negative $\mathbb{R}$-valued measurable functions.

If $g_{n}\stackrel{a.e.}{\longrightarrow}g,$ then
$
\|g_{n}-g\|\stackrel{a.e.}{\longrightarrow}0.
$
Further, one can obtain that
$
\mathbb{L}\ni\|g_{n}\|\stackrel{a.e.}{\longrightarrow}\|g\|\in \mathbb{L}.
$
If there are $h$ and $\varphi$ satisfying the conditions of this theorem, then,
$
\|g_{n}(t)-g(t)\|\leq2h(t)\leq2\varphi,
$
a.e. for $t\in S.$ Applying the classical dominated theorem to $\|g_{n}(t)-g(t)\|,$ we claim
$
\lim_{n}\int_{S}\|g_{n}(t)-g(t)\|d\mu(t)=0.
$
Further, the definition of $\mathbb{L}$-Bochner integrals results in
$
\lim_{n}\int_{S}g_{n}d\mu=\int_{S}gd\mu.
$

If $g_{n}\stackrel{\mu}{\longrightarrow}g,$ then $\|g_{n}\|\stackrel{\mu}{\longrightarrow}\|g\|.$ For each $(g_{n_{k}})_{k\in N}\subseteq (g_{n})_{n\in N},$ there is a subsequence $(g_{n_{k}^{'}})_{k\in N}\subseteq (g_{n_{k}})_{k\in N}$ satisfying $\|g_{n_{k}^{'}}\|\stackrel{a.e.}{\longrightarrow}\|g\|.$ Similarly, we have
$
\lim_{n}\int_{S}g_{n_{k}^{'}}d\mu=\int_{S}gd\mu,
$
which suggests that
$
\lim_{n}\int_{S}g_{n}d\mu=\int_{S}gd\mu.
$
\end{proof}
\begin{lemma}
If $\left\{h_{n},n\in N\right\}\subseteq L^{p}(\mu,X)$ and $h\in L^{p}(\mu,X)$ satisfying
\[
\lim_{n\rightarrow\infty}\int_{S}\|h_{n}(t)-h(t)\|d\mu(t)=0,
\]
then
$h_{n}\stackrel{\mu}{\longrightarrow}h.$
\end{lemma}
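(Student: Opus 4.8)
The plan is to transplant the classical Chebyshev (Markov) inequality to the $\mathbb{L}$-valued setting. The one genuine difficulty is that division by an arbitrary element of $\mathbb{L}^{+}$ is unavailable, so I would test convergence in $\mu$-measure only against \emph{scalar} thresholds $c\mathbf{1}$ with $c>0$ real, where $\mathbf{1}$ is the multiplicative identity of $\mathbb{L}$: these are invertible, and by the Archimedean property of the Dedekind complete $f$-algebra $\mathbb{L}$ the family $\{c\mathbf{1}:c>0\}\subseteq\mathbb{L}^{+}$ still has infimum $0$, hence is an admissible family in the sense of Definition 2.2.

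Concretely, first I would note that $\|h_{n}-h\|$ is an $\mathbb{L}^{+}$-valued $\mathbb{L}$-$\mu$-measurable function whose $\mathbb{L}$-Bochner integral exists in $\mathbb{L}$: this is already implicit in the hypothesis, and in any case follows from $h_{n},h\in L^{p}(\mu,X)$ via the pointwise estimate $\|h_{n}-h\|\leq\|h_{n}-h\|^{p}+\mathbf{1}$ (valid for $p\geq1$), which gives $\int_{S}\|h_{n}-h\|\,d\mu\leq\|h_{n}-h\|_{p}^{p}+\mu(S)\mathbf{1}$. Write $d_{n}:=\int_{S}\|h_{n}(t)-h(t)\|\,d\mu(t)$, so $d_{n}\to0$ in $\mathbb{L}$ by assumption. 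For a fixed real $c>0$ set $A_{n}^{c}:=\{t\in S:\|h_{n}(t)-h(t)\|\geq c\mathbf{1}\}\in\mathcal{F}$. Then $\|h_{n}(\cdot)-h(\cdot)\|\geq c\mathbf{1}\,I_{A_{n}^{c}}$ pointwise on $S$, and monotonicity of the $\mathbb{L}$-Bochner integral for $\mathbb{L}^{+}$-valued functions yields the $\mathbb{L}$-Chebyshev inequality $0\leq c\,\mu(A_{n}^{c})\,\mathbf{1}\leq d_{n}$ in $\mathbb{L}^{+}$.

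The remaining step converts order convergence in $\mathbb{L}$ into genuine convergence of real numbers. Since $0\leq c\,\mu(A_{n}^{c})\,\mathbf{1}\leq d_{n}$ and $d_{n}\to0$ in $\mathbb{L}$, the same witnessing family $\mathcal{E}\downarrow0$ for $d_{n}$ also witnesses $c\,\mu(A_{n}^{c})\,\mathbf{1}\to0$ in $\mathbb{L}$. Now for a nonnegative real sequence $(r_{n})$ one has $r_{n}\mathbf{1}\to0$ in $\mathbb{L}$ if and only if $r_{n}\to0$ in $\mathbb{R}$: the nontrivial direction is that, if $r_{n}\not\to0$, some subsequence has $r_{n_{k}}\geq\delta>0$, so $\delta\mathbf{1}\leq r_{n_{k}}\mathbf{1}\leq\varepsilon$ for every $\varepsilon$ in the witnessing family, forcing $\delta\mathbf{1}\leq\inf\mathcal{E}=0$, a contradiction since $\delta\mathbf{1}>0$. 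Applying this to $r_{n}=c\,\mu(A_{n}^{c})$ gives $\mu(A_{n}^{c})\to0$. As $c>0$ is arbitrary and $\{c\mathbf{1}:c>0\}$ has infimum $0$, this is precisely the assertion $h_{n}\stackrel{\mu}{\longrightarrow}h$.

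I expect the main obstacle to be exactly this conversion: classically Chebyshev's inequality plus the conclusion take one line via division by $\varepsilon$, whereas here one must restrict to the invertible scalar thresholds $c\mathbf{1}$ — legitimate only because they still decrease to $0$ — and then use invertibility of $c\mathbf{1}$ together with Archimedeanity to pass from the $\mathbb{L}$-order convergence $d_{n}\to0$ to $\mu(A_{n}^{c})\to0$ in $\mathbb{R}$. The side points — that $h_{n}-h$ and hence $\|h_{n}-h\|$ is $\mathbb{L}$-$\mu$-measurable, that $A_{n}^{c}\in\mathcal{F}$, and the monotonicity and linearity of the $\mathbb{L}$-Bochner integral used above — are routine from the definitions via $\mathbb{L}$-simple approximants. (An alternative would be to imitate the classical subsequence argument: pick $n_{k}$ with $\sum_{k}d_{n_{k}}$ order-convergent in $\mathbb{L}$ and deduce $\|h_{n_{k}}-h\|\to0$ $\mu$-a.e.\ along a subsequence; but that route requires an $\mathbb{L}$-valued Beppo Levi theorem and seems heavier than the direct one.)
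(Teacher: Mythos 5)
Your proposal is correct and follows essentially the same route as the paper: the Chebyshev--Markov estimate $\gamma\,\mu([\|h_{n}-h\|\geq\gamma])\leq\int_{S}\|h_{n}-h\|\,d\mu$ followed by letting $n\to\infty$. The only difference is that you carefully justify the passage from order convergence of $d_{n}=\int_{S}\|h_{n}-h\|\,d\mu$ in $\mathbb{L}$ to convergence of the real numbers $\mu(A_{n}^{c})$ (via the scalar thresholds $c\mathbf{1}$ and Archimedeanity), a step the paper's one-line proof takes for granted.
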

\begin{proof}
For any $\gamma>0,$
$$
\begin{aligned}
\int_{S}\|h_{n}-h\|d\mu
&\geq\int_{[\|h_{n}-h\|\geq\gamma]}\|h_{n}-h\|d\mu\\
&\geq\gamma\mu([\|h_{n}-h\|\geq\gamma]).
\end{aligned}
$$
Hence $\mu([\|h_{n}-h\|\geq\gamma])\rightarrow 0$ as $n\rightarrow\infty,$ i.e., $h_{n}\stackrel{\mu}{\longrightarrow}h.$
\end{proof}
\begin{theorem}
$L^{p}(\mu,X)$ with $1\leq p<+\infty$ is an $\mathbb{L}$-Banach space.
\end{theorem}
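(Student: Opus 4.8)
The strategy mirrors the classical proof that $L^p$ is complete: show every Cauchy sequence has a rapidly convergent subsequence, build a candidate limit pointwise a.e., and verify convergence in $\|\cdot\|_p$. The twist throughout is that "$\|\cdot\|_p$-Cauchy" means Cauchy in the sense of Definition 2.2, i.e. controlled by a set $\mathcal{E}\subseteq\mathbb{L}^+$ with $\mathcal{E}\downarrow 0$, not by real $\varepsilon$'s; and that the underlying measure-theoretic estimates (Chebyshev, Borel–Cantelli) must be applied to the $\mathbb{R}$-valued functions $\|u(t)\|$, using that these are genuine non-negative measurable functions (as established in the proof of Theorem 2.7).

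\medskip

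\emph{Step 1: reduce to a subsequence.} First I would take a Cauchy net — but since $L^p(\mu,X)$-completeness for sequences suffices when the order structure is countably generated in the relevant sense, and the paper's later use is sequential, I would argue it is enough to treat a Cauchy sequence $(h_n)$. Given $\mathcal{E}\downarrow 0$ witnessing the Cauchy property, I would extract a subsequence $(h_{n_k})$ with $\|h_{n_{k+1}}-h_{n_k}\|_p\leq \varepsilon_k$ for a sequence $\varepsilon_k\in\mathbb{L}^+$ that can be chosen with $\sum_k \varepsilon_k$ convergent in $\mathbb{L}$ (this is possible because $\mathbb{L}$ is Dedekind complete and $\mathcal{E}\downarrow 0$ lets us pick elements as small as we like; if necessary replace $\varepsilon_k$ by $2^{-k}\xi$ times a dominating element).

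\medskip

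\emph{Step 2: pointwise a.e. limit.} Set $g_K(t)=\sum_{k=1}^{K}\|h_{n_{k+1}}(t)-h_{n_k}(t)\|$ and $g(t)=\sum_{k=1}^{\infty}\|h_{n_{k+1}}(t)-h_{n_k}(t)\|$. By Minkowski (Proposition 2.6), $\|g_K\|_p\leq\sum_{k=1}^{K}\varepsilon_k\leq\sum_{k=1}^{\infty}\varepsilon_k$, and by monotone convergence applied to the $\mathbb{R}$-valued functions $g_K(t)^p\uparrow g(t)^p$ (valid since each $\|h_{n_{k+1}}(t)-h_{n_k}(t)\|$ is a non-negative $\mathbb{R}$-valued measurable function), $g\in L^p(\mu,X)$-sense, so in particular $g(t)<\infty$ for $\mu$-a.e. $t$. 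On that full-measure set the series $\sum_k (h_{n_{k+1}}(t)-h_{n_k}(t))$ is absolutely summable in the $\mathbb{L}$-Banach space $X$, hence (using completeness of $X$) telescoping gives $h(t):=\lim_k h_{n_k}(t)$ exists in $X$ for a.e. $t$; define $h$ arbitrarily (say $\mathbf{0}$) off this set.

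\medskip

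\emph{Step 3: $h\in L^p(\mu,X)$ and $\|h-h_{n_k}\|_p\to 0$.} Pointwise a.e. one has $\|h(t)-h_{n_k}(t)\|\leq\sum_{j\geq k}\|h_{n_{j+1}}(t)-h_{n_j}(t)\|=:r_k(t)$, with $r_k\downarrow 0$ a.e. and $r_k\leq g$. Since $\|h\|\leq\|h_{n_1}\|+g$ we get $h\in L^p(\mu,X)$ by Minkowski. For the convergence, $r_k(t)^p\leq g(t)^p$ with $g^p$ integrable (its integral lies in $\mathbb{L}$), so Lebesgue's dominated convergence theorem — here I would invoke Theorem 2.7, or directly its $\mathbb{R}$-valued classical form applied to $r_k(t)^p$ — yields $\|h-h_{n_k}\|_p^p=\int_S r_k(t)^p\,d\mu(t)\to 0$ in $\mathbb{L}$, and hence $\|h-h_{n_k}\|_p\to 0$ in $\mathbb{L}$. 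Finally, combine with the original Cauchy estimate: for the full sequence, $\|h-h_n\|_p\leq\|h-h_{n_k}\|_p+\|h_{n_k}-h_n\|_p$, and choosing $k$ large (with $n_k\geq\alpha_0(\varepsilon)$) shows $\|h_n-h\|_p\to 0$ in the sense of Definition 2.2, with witnessing set the pointwise sums $\{\,\|h-h_{n_k}\|_p+\varepsilon_k : k\in N\,\}$, which decreases to $0$.

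\medskip

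\emph{Main obstacle.} The delicate point is Step 1 together with the very last assembly: turning the Definition 2.2 Cauchy condition (a set $\mathcal{E}\downarrow 0$, possibly not directed, no countability) into a subsequence with a \emph{summable} $\mathbb{L}^+$-valued tail, and then producing an honest witnessing set $\mathcal{E}'\downarrow 0$ for the conclusion. One must be careful that $\sum_k\varepsilon_k$ converges in $\mathbb{L}$ (use Dedekind completeness plus a geometric choice $\varepsilon_k\leq 2^{-k}\xi$ for suitable $\xi\in\mathbb{L}^+$ extracted from $\mathcal{E}$), and that "monotone/dominated convergence" is applied only to the scalar functions $t\mapsto\|u(t)\|$, which is legitimate precisely by the argument already given in the proof of Theorem 2.7 that these are $\mathbb{R}$-valued measurable. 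Everything else is a routine transcription of the classical Riesz–Fischer argument into the $\mathbb{L}$-setting.
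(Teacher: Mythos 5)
Your overall plan is the classical Riesz--Fischer argument, but it breaks down at exactly the point you yourself flag as ``the delicate point,'' and in this setting that point cannot be repaired as sketched. First, the reduction from Cauchy nets to Cauchy sequences is not available: Definition 2.2 defines an $\mathbb{L}$-Banach space by requiring every Cauchy \emph{net} to converge, and the convergence here is of order type, which is in general not topological (cf.\ Proposition 3.1); sequential completeness therefore does not imply completeness, and nothing in the paper licenses ``it is enough to treat a Cauchy sequence.'' Second, even for a Cauchy sequence your Step 1 fails: the witnessing set $\mathcal{E}\subseteq\mathbb{L}^{+}$ only satisfies $\inf\mathcal{E}=0$ and is explicitly allowed to be non-directed, so there need not exist any single $\varepsilon\in\mathcal{E}$ below a prescribed element such as $2^{-k}\xi$ (for instance, in $\mathbb{L}=\mathbb{R}^{2}$ the set $\{(1,1/n):n\in N\}\cup\{(1/n,1):n\in N\}$ has infimum $0$ but contains no element $\leq(1/2,1/2)$). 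Hence you cannot extract a subsequence with $\mathbb{L}^{+}$-summable increments, and the pointwise-a.e.\ construction of the limit in Steps 2--3 has no starting point.

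The paper avoids both problems by never passing to a subsequence: it works with the Cauchy net itself, uses the pointwise bound $\|u_{\alpha}(t)-u_{\beta}(t)\|\leq\|u_{\alpha}-u_{\beta}\|_{p}$ (its (2.2)) to conclude that $(u_{\alpha}(t))_{\alpha\in I}$ is a Cauchy net in $X$ for \emph{every} $t$, defines $u(t)=\lim_{\alpha}u_{\alpha}(t)$ by completeness of $X$, obtains $\bigl(\int_{S}\|u(t)-u_{\alpha}(t)\|^{p}d\mu(t)\bigr)^{1/p}\leq\varepsilon\mu(S)^{1/p}$ by passing to the limit in $\beta$ via the dominated convergence theorem (Theorem 2.6) and the closedness of the order interval $\{\zeta\in\mathbb{L}:\zeta\leq\varepsilon^{p}\mu(S)\}$, and finally verifies the $\mathbb{L}$-Bochner integrability of $u$ by an interchange of limits with the approximating simple functions of each $u_{\alpha}$. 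To salvage your route you would need either a net version of the summable-subsequence device or the pointwise domination the paper uses; as written, the proposal does not prove the theorem.
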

\begin{proof}
For any Cauchy net $(u_{\alpha})_{\alpha\in I}\subseteq L^{p}(\mu,X),$ there is a set $\mathcal{E}\downarrow 0$ satisfying that for any $\varepsilon\in\mathcal{E}$ there is an $\alpha_{0}\in I$ satisfying
\begin{equation}
\alpha,\beta\geq\alpha_{0}\Rightarrow\|u_{\alpha}-u_{\beta}\|_{p}\leq\varepsilon.
\end{equation}
Then obviously
\begin{equation}
\|u_{\alpha}(t)-u_{\beta}(t)\|\leq\|u_{\alpha}-u_{\beta}\|_{p}
\end{equation}
holds for any $t\in S,$ it follows that $(u_{\alpha}(t))_{\alpha\in I}$ is a Cauchy net in $X$ for any $t\in S.$ Define $u:S\rightarrow X$ by
$
u(t)=\lim_{\alpha}u_{\alpha}(t),
$
which is  well-defined due to the completeness of $X.$ It is sufficient to certify that $u\in L^{p}(\mu,X)$ and $u_{\alpha}\rightarrow u$ in $L^{p}(\mu,X).$

According to Proposition 2.5, we have for any $\alpha,$
\begin{equation}
\begin{split}
&\left(\int_{S}\|u(t)\|^{p}d\mu(t)\right)^{\frac{1}{p}}\\
&\leq\left(\int_{S}\|u_{\alpha}(t)\|^{p}d\mu(t)\right)^{\frac{1}{p}}+\left(\int_{S}\|u(t)-u_{\alpha}(t)\|^{p}d\mu(t)\right)^{\frac{1}{p}}\\
&=\|u_{\alpha}\|_{p}+\left(\int_{S}\|u(t)-u_{\alpha}(t)\|^{p}d\mu(t)\right)^{\frac{1}{p}}.
\end{split}
\end{equation}
Choose $\varepsilon\in \mathcal{E}$ and $\alpha_{0}$ as in (2.1). Then, it follows from (2.2) that
\[
\alpha,\beta\geq\alpha_{0}\Rightarrow \int_{S}\|u_{\alpha}(t)-u_{\beta}(t)\|^{p}d\mu(t)\leq \varepsilon^{p}\mu(S).
\]
Since for each $t\in S,$ $\lim_{\beta}u_{\beta}(t)=u(t)$ in $X$, it is easy to get that
$\lim_{\beta}(u_{\alpha}(t)-u_{\beta}(t))$=$u_{\alpha}(t)-u(t)$
for every $\alpha\geq\alpha_{0}$ and $t\in S.$
Consequently, combining (2.1), (2.2) and Theorem 2.6, we have
$$
\begin{aligned}
\lim_{\beta}\int_{S}\|u_{\alpha}(t)-u_{\beta}(t)\|^{p}d\mu(t)&=\int_{S}\lim_{\beta}\|u_{\alpha}(t)-u_{\beta}(t)\|^{p}d\mu(t)\\
&=\int_{S}\|u_{\alpha}(t)-u(t)\|^{p}d\mu(t)
\end{aligned}
$$
for each $\alpha\geq\alpha_{0}.$ Then, since $\left\{\zeta\in \mathbb{L}: \zeta\leq \varepsilon^{p}\mu(S)\right\}$ is closed, it follows that
\begin{equation}
\left(\int_{S}\|u(t)-u_{\alpha}(t)\|^{p}d\mu(t)\right)^{\frac{1}{p}}\leq\varepsilon\mu(S)^{\frac{1}{p}}.
\end{equation}
Combining (2.3) and (2.4) yields $\|u\|_{p}<+\infty$  and (2.4) implies that $u_{\alpha}\rightarrow u$ as $n\rightarrow\infty$ in the $\|\cdot\|_{p}$-topology.

Next, we will check that $u$ is $\mathbb{L}$-Bochner integrable. Since $u_{\alpha}$ is $\mathbb{L}$-Bochner integrable for any $\alpha\in I,$ it follows that $u_{\alpha}$ is $\mathbb{L}$-$\mu$-measurable and there are $\alpha$ sequences of $\mathbb{L}$-simple functions $(u_{\alpha}^{n})_{n\in N}$ satisfying
$
\lim_{n}\int_{S}\|u_{\alpha}^{n}-u_{\alpha}\|d\mu=0.
$
According to Lemma 2.7, there are $\alpha$ subsequences $(u_{\alpha}^{n_{k}})_{k\in N}\subseteq(u_{\alpha}^{n})_{n\in N}$ such that
$
u_{\alpha}^{n_{k}}\stackrel{a.e.}{\longrightarrow} u_{\alpha},
$
which shows that
$
\lim_{k}\|u_{\alpha}^{n_{k}}-u_{\alpha}\|=0, a.e..
$

Since $u_{\alpha}$ is $\mathbb{L}$-$\mu$-measurable for any $\alpha\in I,$ there exist $\alpha$ sequences of $\mathbb{L}$-simple functions $(\bar{u}_{\alpha}^{n})_{n\in N}$ satisfying
$\lim_{n}\|\bar{u}_{\alpha}^{n}-u_{\alpha}\|=0,$ $\mu$-a.e.. From one perspective,
\begin{equation}
\lim_{n}\lim_{\alpha}\|\bar{u}_{\alpha}^{n}-u_{\alpha}\|=\lim_{\alpha}\lim_{n}\|\bar{u}_{\alpha}^{n}-u_{\alpha}\|=0, \mu-a.e..
\end{equation}
From another perspective, set
$
\bar{u}^{n}(t)=\lim_{\alpha}\bar{u}_{\alpha}^{n}(t)
$
for all $t\in S$ and $n\in N.$
Then, one has
\begin{equation}
\lim_{n}\lim_{\alpha}\|\bar{u}_{\alpha}^{n}-u_{\alpha}\|=\lim_{n}\|\bar{u}^{n}-u\|.
\end{equation}
Obviously, $(\bar{u}^{n})_{n\in N}$ is a sequence of $\mathbb{L}$-simple functions. Combining (2.5) and (2.6), we can obtain that
$\lim_{n}\|\bar{u}^{n}-u\|=0,$ $\mu$-a.e.,
that is, $u$ is $\mathbb{L}$-$\mu$-measurable.

Moreover, it is evident to obtain that
\begin{equation}
\lim_{n}\lim_{\alpha}\int_{S}\|u_{\alpha}^{n}-u_{\alpha}\|d\mu=\lim_{\alpha}\lim_{n}\int_{S}\|u_{\alpha}^{n}-u_{\alpha}\|d\mu=0.
\end{equation}
On the other side, set
$
u^{n}(t)=\lim_{\alpha}u_{\alpha}^{n}(t)
$
for all $t\in S$ and $n\in N,$ then there is an $\delta\in \mathbb{L}^{+}$ satisfying
$$
\begin{aligned}
\|u_{\alpha}^{n}(t)-u_{\alpha}(t)\|&\leq\|u_{\alpha}^{n}(t)-u^{n}(t)\|+\|u^{n}(t)-u_{\alpha}^{n_{k}}(t)\|+\|u_{\alpha}^{n_{k}}(t)-u_{\alpha}(t)\|\\
&\leq3\delta, a.e..
\end{aligned}
$$
Therefore, according to Theorem 2.6,
\begin{equation}
\lim_{n}\lim_{\alpha}\int_{S}\|u_{\alpha}^{n}-u_{\alpha}\|d\mu=\lim_{n}\int_{S}\lim_{\alpha}\|u_{\alpha}^{n}-u_{\alpha}\|d\mu=\lim_{n}\int_{S}\|u^{n}-u\|d\mu.
\end{equation}
Combining (2.7) and (2.8), we have
$
\lim_{n}\int_{S}\|u^{n}-u\|d\mu=0.
$
Consequently, $u$ is $\mathbb{L}$-Bochner integrable and this implies that $u\in L^{p}(\mu,X).$
\end{proof}

\section{conjugate spaces}
It is worth noting that in general, the order convergence of nets in $\mathbb{L}$-normed spaces is not topological, i.e., there is no topology on $\mathbb{L}$-normed spaces satisfying the topological convergence coincides with the order convergence of nets in such spaces. But this situation can happen if we put some restrictions on such spaces.
\begin{proposition}\cite{ref13}
Assume that $(Y,\|\cdot\|)$ is an $\mathbb{L}$-normed space, then the following items are equivalent:
\begin{enumerate}
\item There is a topology $\tau$ on $Y$ satisfying that for any net $(y_{\alpha})_{\alpha\in I}\subseteq Y,$ $y_{\alpha}\rightarrow y$ if and only if $y_{\alpha}\stackrel{\tau}{\longrightarrow}y;$
\item $\dim Y<+\infty.$

\end{enumerate}
\end{proposition}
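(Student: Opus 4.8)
The plan is to establish the two implications separately, in both directions transferring the problem to the scalar $f$-algebra $\mathbb{L}$ and to the model module $\mathbb{L}^{n}$.

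\emph{$(2)\Rightarrow(1)$.} Assuming $\dim Y=n<+\infty$, I would fix an $\mathbb{L}$-basis $e_{1},\dots,e_{n}$ of $Y$ and the coordinate map $\Phi(y)=(\xi_{1}(y),\dots,\xi_{n}(y))\in\mathbb{L}^{n}$, and first prove an $\mathbb{L}$-valued analogue of the classical equivalence of norms on a finite-dimensional space: there exist $a,b\in\mathbb{L}^{+}$, with $a$ not a zero divisor (after a localization argument, invertible on the relevant band of $\mathbb{L}$), such that $a\cdot\bigvee_{i}|\xi_{i}(y)|\le\|y\|\le b\cdot\bigvee_{i}|\xi_{i}(y)|$ for all $y\in Y$. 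The upper estimate is just the $\mathbb{L}$-triangle inequality with $b=\sum_{i}\|e_{i}\|$; for the lower one I would run the classical argument with order limits in place of sequential compactness, using Dedekind completeness of $\mathbb{L}$ to form the infimum of $\|\cdot\|$ over the ``$\mathbb{L}$-unit sphere'' of $\mathbb{L}^{n}$ and axiom~(1) of Definition~2.1 to control it. Once the two $\mathbb{L}$-norms are equivalent, $y_{\alpha}\to y$ in $(Y,\|\cdot\|)$ holds iff $\xi_{i}(y_{\alpha})\to\xi_{i}(y)$ in $(\mathbb{L},|\cdot|)$ for every $i$, so it remains to produce a topology on $(\mathbb{L},|\cdot|)$ whose net convergence is exactly the convergence of Definition~2.2; then its $n$-fold product, pulled back along $\Phi$, is the required topology $\tau$ on $Y$, a finite product of topologies being again a topology.

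\emph{$(1)\Rightarrow(2)$.} I would argue by contraposition. Suppose $\dim Y=+\infty$ and pick an $\mathbb{L}$-linearly independent sequence $(y_{k})_{k\in N}$ in $Y$, normalized so that each $\|y_{k}\|$ is the unit of $\mathbb{L}$. A topology-induced convergence must satisfy the Kelley axioms for a topological convergence class, in particular the one asserting that if a net fails to converge to $\mathbf{0}$ then some subnet has no subnet converging to $\mathbf{0}$. Accordingly, it suffices to construct a net $(z_{\sigma})$ — indexed by a directed set recording a finite amount of ``staircase'' data built from the $y_{k}$ together with elements $\varepsilon\in\mathbb{L}^{+}$ shrinking to $0$ — such that every subnet of $(z_{\sigma})$ admits a further subnet converging to $\mathbf{0}$ in the sense of Definition~2.2, while $(z_{\sigma})$ itself does not. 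The mechanism: with infinitely many independent directions $y_{k}$ active, no single set $\mathcal{E}\downarrow 0$ can dominate all tails (the $k$-th coordinate is forced to stay large), whereas passing to a subnet ``freezes'' all but finitely many of the directions and restores the domination. If some $\tau$ reproduced the convergence, then, since $z_{\sigma}\not\to\mathbf{0}$, one of its subnets would admit no further subnet converging to $\mathbf{0}$ — contradicting the construction — whence $\dim Y<+\infty$.

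The hard part will be the scalar case concealed in $(2)\Rightarrow(1)$: exhibiting a genuine topology on $(\mathbb{L},|\cdot|)$ that captures the convergence of Definition~2.2 (this is where the fine structure of $\mathbb{L}$, via $C(Z)\subseteq\mathbb{L}\subseteq C_{\infty}(Z)$, has to be exploited), together with keeping the comparison factor $a$ away from the zero divisors of $\mathbb{L}$ in the $\mathbb{L}$-norm-equivalence step. In $(1)\Rightarrow(2)$ the delicate point is to design the witnessing net $(z_{\sigma})$ so that ``every subnet has a further convergent subnet'' and ``$(z_{\sigma})$ does not converge'' hold simultaneously in an arbitrary infinite-dimensional $Y$.
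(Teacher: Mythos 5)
The paper gives no proof of this proposition --- it is quoted from \cite{ref13} --- so there is nothing internal to compare against; I can only assess your plan on its merits, and it has a genuine gap. The crux is your reading of $\dim Y$ as the rank of $Y$ as an $\mathbb{L}$-module (you ``fix an $\mathbb{L}$-basis $e_{1},\dots,e_{n}$'' and reduce to $\mathbb{L}^{n}$). Under that reading $(2)\Rightarrow(1)$ is simply false: take $Y=\mathbb{L}$ itself, which has $\mathbb{L}$-rank $1$, with the norm $|\cdot|$. The convergence of Definition~2.2 on $(\mathbb{L},|\cdot|)$ is exactly order convergence in the vector lattice $\mathbb{L}$, and the theorem of \cite{ref13} says precisely that this convergence is \emph{not} induced by any topology unless $\mathbb{L}$ is finite dimensional over the scalars. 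So the step you defer as ``the hard part'' --- exhibiting a genuine topology on $(\mathbb{L},|\cdot|)$ whose net convergence is that of Definition~2.2 --- is not merely hard, it is impossible whenever $\dim_{\mathbb{R}}\mathbb{L}=\infty$, and your reduction of $(2)\Rightarrow(1)$ to that step collapses. The statement only makes sense (and only matches its use in Example~3.3, where $\dim\mathbb{L}<\infty$ is assumed to invoke it) if $\dim Y$ means linear dimension over $\mathbb{R}$ or $\mathbb{C}$; with that reading the correct strategy is entirely different, since finite real dimension of $Y$ already forces the relevant part of the norm's value range in $\mathbb{L}$ to be finite dimensional.

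Two further points would need real work even after fixing the dimension issue. First, the lower estimate $a\cdot\bigvee_{i}|\xi_{i}(y)|\le\|y\|$ with $a$ bounded away from the zero divisors is not obtainable by transplanting the classical compactness argument: there is no compact $\mathbb{L}$-unit sphere, and the infimum of $\|\cdot\|$ over it, formed by Dedekind completeness, can perfectly well be $0$ or a zero divisor; this is the hardest step of any module-theoretic norm-equivalence theorem and cannot be waved through. Second, in $(1)\Rightarrow(2)$ your witnessing net is only described by its intended properties (``every subnet has a further convergent subnet, but the net itself does not converge''); no construction is given, the normalization $\|y_{k}\|=1$ is generally unavailable since $\|y_{k}\|$ need not be invertible in $\mathbb{L}$, and the argument must also handle the case where the obstruction comes from the infinite dimensionality of the norm's values in $\mathbb{L}$ rather than from infinitely many independent directions in $Y$. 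As it stands the proposal is a programme whose central lemma in one direction is false under your own hypotheses, so it cannot be accepted as a proof.
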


A mapping $G:S\rightarrow X$ is called a measure on $X$ if it satisfies $G(\emptyset)=0$ and G is countably additive, i.e., $G(\bigcup_{n=1}^{\infty}F_{n})=\sum_{n=1}^{\infty}G(F_{n})$ in the $\mathbb{L}$-norm topology of $X$ for all sequences $(F_{n})_{n\in N}$ of mutually disjoint members of $\mathcal{F}.$

\begin{definition}
An $\mathbb{L}$-Banach space $X$ is said to possess the RNP with regard to $(S,\mathcal{F},\mu)$ if for any measure $G:\mathcal{F}\rightarrow X$ satisfying:
\begin{enumerate}
\item $G$ is $\mu$-continuous signified by $G\ll\mu$, i.e., for any $F\in\mathcal{F},$ $\mu(F)\rightarrow 0$ hints $G(F)\rightarrow 0.$

\item $G$ is of bounded variation, i.e.,
\[
|G|(S)=\sup_{\mathcal{N}}\sum\limits_{B\in \mathcal{N}}\|G(B)\|
\]
exists in $\mathbb{L}.$
where $\mathcal{N}$ is any partition of $S$ and divides $S$ into finite mutually disjoint elements of $\mathcal{F},$
there is $g\in L^{1}(\mu,X)$ satisfying
$G(F)=\int_{F}gd\mu$ for each $F\in\mathcal{F}.$

\end{enumerate}
Further, an $\mathbb{L}$-Banach space $X$ is said to possess the RNP if $X$ possesses this property with regard to every finite measure space.
\end{definition}
We give an example to show that this definition is indispensable because there do exist an $\mathbb{L}$-Banach space that does not possess this property.
\begin{example}
For a given finite measure space $(S,\mathcal{F},\mu),$ define a mapping $G:\mathcal{F}\rightarrow\mathbb{L}$ by
\[
G(F)=T(I_{F}), \forall F\in\mathcal{F}
\]
and
\[
T(u)=\sum\limits_{j}\frac{\mu(F_{j})\int_{F_{j}}u d\mu}{G(F_{j})}, \forall u\in L^{1}(\mu,\mathbb{L}),
\]
where $(F_{j})_{j\in N}$ is a sequence of mutually disjoint members of $\mathcal{F}$ and $\dim\mathbb{L}<\infty.$ Now we will prove that $\mathbb{L}$ does not possess the RNP.

In fact, $(\mathbb{L},|\cdot|)$ is a special $\mathbb{L}$-Banach space which has been proved in \cite{ref4}. Since $\dim\mathbb{L}<\infty,$ the fact that the order convergence of nets in $\mathbb{L}$ is topological holds according to Proposition 3.1. Thus, the cases involving compactness as well as other convergences in $\mathbb{L}$ are all true in the following proof, just as they are in the canonical case. It is evident to obtain that
$$
G(F)=T(I_{F})=\sum\limits_{j}\frac{\mu(F_{j})\int_{F_{j}}I_{F}d\mu}{G(F_{j})}=\sum\limits_{j}\frac{\mu(F_{j})\mu(F_{j}\bigcap F)}{G(F_{j})}.
$$
Consequently, if $\mu(F)\rightarrow 0,$ then $\mu(F_{j}\bigcap F)\rightarrow 0$ for any $j\in N,$ it is apparent that $G(F)\rightarrow 0,$ that is, $G\ll\mu.$

Obviously, $G$ is countably additive. Besides, one has for any $B\in \mathcal{N},$
\[
|G(B)|=|T(I_{B})|\leq\|T\|\|I_{B}\|
\]
exists in $\mathbb{L},$
where $\mathcal{N}$ is any finite partition of $S.$ Then
$
\sum\limits_{B\in \mathcal{N}}|G(B)|\leq\sum\limits_{B\in \mathcal{N}}\|T\|\|I_{B}\|,
$
and further
\[
|G|(S)=\sup_{\mathcal{N}}\sum\limits_{B\in \mathcal{N}}|G(B)|
\]
exists in $\mathbb{L},$ which indicates that $G$ is of bounded variation.
If $\mathbb{L}$ possesses the RNP with regard to $(S,\mathcal{F},\mu)$, then there is $v\in L^{1}(\mu,\mathbb{L})$ satisfying
$$
T(I_{F})=G(F)=\int_{F}vd\mu, \forall F\in\mathcal{F}.
$$
For each $\mathbb{L}$-simple function $h\in L^{1}(\mu,\mathbb{L})$, there exist $\varphi_{1},\varphi_{2},\cdots \varphi_{n}\in \mathbb{L}$ and mutually disjoint $S_{1},S_{2},\cdots S_{n}\in S$ satisfying $h=\sum\limits_{j=1}\limits^{n}\varphi_{j}I_{S_{j}},$ where $n\in N.$ one has
$$
T(h)=\sum\limits_{j=1}\limits^{n}\varphi_{j}T(I_{S_{j}})
=\sum\limits_{j=1}\limits^{n}\varphi_{j}\int_{S_{j}}vd\mu=\sum\limits_{j=1}\limits^{n}\varphi_{j}\int_{S}vI_{S_{j}}d\mu
=\int_{S}hvd\mu.
$$
Further, for each $u\in L^{1}(\mu,\mathbb{L}),$ there is a sequence of $\mathbb{L}$-simple functions $(u_{n})_{n\in N}\subseteq L^{1}(\mu,\mathbb{L})$ satisfying $u_{n}\stackrel{a.e.}{\longrightarrow}u.$ Thus
\[
T(u)=\int_{S}uvd\mu, \forall u\in L^{1}(\mu,\mathbb{L}).
\]

From one perspective, since $v$ is $\mathbb{L}$-Bochner integrable, there is a sequence of $\mathbb{L}$-simple functions $(v_{n})_{n\in N}$ satisfying
$
\lim_{n}\int_{S}\|v_{n}-v\|d\mu=0.
$
Next, for any $n\in N$, define the following operators family $T_{n}:L^{1}(\mu,\mathbb{L})\rightarrow \mathbb{L}$ by
\[
T_{n}(u)=\int_{S}uv_{n}d\mu, \forall u\in L^{1}(\mu,\mathbb{L}),
\]
then one can notice that $\dim R(T_{n})\leq\dim \mathbb{L}<+\infty,$ thus $T_{n}$ as continuous linear operators having finite ranks are compact operators for any $n\in N$. Moreover,
$$
\begin{aligned}
|\left(T_{n}-T\right)(u)|&=\left|\int_{S}u\left(v_{n}-v\right)d\mu\right|\\
&\leq\int_{S}\|u\|\|v_{n}-v\|d\mu\\
&\leq\|u\|_{1}\int_{S}\|v_{n}-v\|d\mu,
\end{aligned}
$$
which indicates that $T$ as the uniform limit of a compact operators family is compact. Consequently, the set
\[
M:=\left\{T(cI_{F}):F\in\mathcal{F} \ and \ c \ is \ a \ finite \ number\right\}
\]
is relatively compact.

From another perspective, without loss of generality, we assume that $v$ is not 0 everywhere. Take $\left(\frac{1}{v}I_{F_{n}}\right)_{n\in N}\subseteq M$ satisfying
\[
\mu(F_{n})=\frac{\mu(S)}{2} \ and \ \mu(F_{n}\bigtriangleup F_{m})=\frac{\mu(S)}{3}
\]
for $n\neq m.$ Since
$$
\begin{aligned}
\left|T\left(\frac{1}{v}I_{F_{n}}\right)-T\left(\frac{1}{v}I_{F_{m}}\right)\right|&=\left|\int_{S}\frac{1}{v}I_{F_{n}}vd\mu-\int_{S}\frac{1}{v}I_{F_{m}}vd\mu\right|\\
&\leq\int_{S}\left|I_{F_{n}}-I_{F_{m}}\right|d\mu\\
&=\mu(F_{n}\bigtriangleup F_{m})\\
&=\frac{\mu(S)}{3},
\end{aligned}
$$
$T\left(\frac{1}{v}I_{F_{n}}\right)$ does not have convergent subsequence, which contradicts with the compactness of $T.$
\end{example}

\begin{theorem} Let $(S,\mathcal{F},\mu)$ be a given finite measure space and $1\leq p<+\infty$, $1<q\leq+\infty$ a pair of $H\ddot{o}lder$ conjugate indices. If $(X^{*},\|\cdot\|_{X^{*}})$ possesses the RNP, then the conjugate space of $L^{p}(\mu,X),$ i.e., $\left(L^{p}(\mu,X)\right)^{*},$ is isometrically isomorphic with $L^{q}(\mu,X^{*})$ under the mapping
$$
F:v\in L^{q}(\mu,X^{*})\rightarrow F_{v}\in\left(L^{p}(\mu,X)\right)^{*},
$$
where for each $v\in L^{q}(\mu,X^{*}),$
$F_{v}$ $($denoting $F(v))$ $:L^{p}(\mu,X)\rightarrow \mathbb{L}$
is defined by
\[
F_{v}(u):=\int_{S}uvd\mu=\int_{S}\langle u,v\rangle d\mu=\int_{S}v(t)\left(u(t)\right)d\mu(t)
\]
for any $u\in L^{p}(\mu,X).$
\end{theorem}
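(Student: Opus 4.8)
The plan is to establish that $F$ is a well-defined, linear, $\mathbb{L}$-norm-preserving bijection, in three stages: that $F$ is a linear contraction; that $F$ is surjective, via the RNP of $X^{*}$; and that the surjectivity construction forces $\|F_{v}\|=\|v\|_{q}$, which also yields injectivity. First I would check that $F$ is well-defined with $\|F_{v}\|_{(L^{p}(\mu,X))^{*}}\le\|v\|_{q}$. For $u\in L^{p}(\mu,X)$ and $v\in L^{q}(\mu,X^{*})$ the function $t\mapsto\langle u(t),v(t)\rangle=v(t)(u(t))$ is $\mathbb{L}$-$\mu$-measurable, being the $\mu$-a.e.\ limit of the pairings of $\mathbb{L}$-simple approximants of $u$ and $v$, and $|\langle u(t),v(t)\rangle|\le\|u(t)\|\,\|v(t)\|$; applying the H\"older inequality (Proposition 2.4) to the $\mathbb{L}^{+}$-valued functions $\|u\|$ and $\|v\|$ gives $\int_{S}|\langle u,v\rangle|\,d\mu\le\|u\|_{p}\|v\|_{q}<+\infty$, so $F_{v}(u)=\int_{S}\langle u,v\rangle\,d\mu$ exists in $\mathbb{L}$ and $|F_{v}(u)|\le\|u\|_{p}\|v\|_{q}$. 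The linearity of $F_{v}$ in $u$ and of $F$ in $v$ is routine.

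For surjectivity, fix $\Phi\in(L^{p}(\mu,X))^{*}$ and define $G:\mathcal{F}\to X^{*}$ by $G(E)(x)=\Phi(xI_{E})$ for $x\in X$. Since $\|xI_{E}\|_{p}=\|x\|\,\mu(E)^{1/p}$, we get $\|G(E)\|_{X^{*}}\le\|\Phi\|\,\mu(E)^{1/p}$, and this single estimate delivers $G(\emptyset)=0$, $G\ll\mu$, and---by applying Theorem 2.6 to $xI_{E\setminus(F_{1}\cup\cdots\cup F_{N})}$---countable additivity of $G$ in the $\|\cdot\|_{X^{*}}$-topology; thus $G$ is an $X^{*}$-valued measure. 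Bounded variation is immediate when $p=1$ (then $\sum_{i}\|G(B_{i})\|_{X^{*}}\le\|\Phi\|\,\mu(S)$ for every finite $\mathcal{F}$-partition $\{B_{i}\}$), while for $p>1$ I would argue as in the scalar $\ell^{p}$--$\ell^{q}$ duality: for a finite partition $\{B_{i}\}_{i=1}^{n}$, choosing $x_{i}\in X$ with $\|x_{i}\|\le1$ and $G(B_{i})(x_{i})$ close to $\|G(B_{i})\|_{X^{*}}$, testing $\Phi$ against $\sum_{i}a_{i}x_{i}I_{B_{i}}$, and optimising over the weights $a_{i}\ge0$ yields $\sum_{i}\|G(B_{i})\|_{X^{*}}\le\|\Phi\|\,\mu(S)^{1/p}$, so $|G|(S)$ exists in $\mathbb{L}$. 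Now $G\ll\mu$ and $G$ is of bounded variation, so the RNP of $X^{*}$ with regard to $(S,\mathcal{F},\mu)$ (Definition 3.2) produces $g\in L^{1}(\mu,X^{*})$ with $G(E)=\int_{E}g\,d\mu$ for all $E\in\mathcal{F}$.

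It remains to show $g\in L^{q}(\mu,X^{*})$ with $\|g\|_{q}\le\|\Phi\|$, and that $F_{g}=\Phi$. For the first point I would use the representation $\|g\|_{q}=(\sup_{E\in\mathcal{F}}\int_{E}\|g\|_{X^{*}}^{q}\,d\mu)^{1/q}$ from Section 2 together with a truncation over the $\mathcal{F}$-sets on which $\|g\|_{X^{*}}$ is $\mathbb{L}$-bounded and test functions roughly of the form $u\approx\|g\|_{X^{*}}^{q-1}x(\cdot)$ with $\langle x(t),g(t)\rangle$ approximating $\|g(t)\|_{X^{*}}$ (for $p=1$, functions supported where $\|g\|_{X^{*}}$ exceeds a prescribed threshold), to conclude $\|g\|_{q}\le\|\Phi\|$; in the case $p=1,q=\infty$ this reads $\|g(t)\|_{X^{*}}\le\|\Phi\|$, $\mu$-a.e. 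For $F_{g}=\Phi$, commuting evaluation at $x$ with the $\mathbb{L}$-Bochner integral gives $\Phi(xI_{E})=G(E)(x)=\int_{E}g(t)(x)\,d\mu(t)$, so $\Phi$ and $F_{g}$ agree on $\mathbb{L}$-simple functions; since these are dense in $L^{p}(\mu,X)$ and both $\Phi$ and $F_{g}$ are continuous (the latter by the first stage and Theorem 2.6), $\Phi=F_{g}$ on all of $L^{p}(\mu,X)$. Finally, $\|F_{g}\|\le\|g\|_{q}$ (first stage) together with $\|g\|_{q}\le\|\Phi\|=\|F_{g}\|$ (just proved) gives $\|F_{v}\|=\|v\|_{q}$ for every $v\in L^{q}(\mu,X^{*})$, so $F$ is isometric, hence injective, hence an isometric isomorphism onto $(L^{p}(\mu,X))^{*}$.

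I expect the main obstacle to be the two places where the classical argument invokes a Hahn--Banach/norm-attainment selection $x(t)\in X$ with $\|x(t)\|\le1$ and $\langle x(t),g(t)\rangle$ approximating $\|g(t)\|_{X^{*}}$: the bounded-variation estimate for $p>1$ and the promotion of $g$ to $L^{q}(\mu,X^{*})$. Because $\|\cdot\|_{X^{*}}$ is an $\mathbb{L}$-valued supremum rather than a scalar one, the usual pointwise $\varepsilon$-selection is unavailable and must be replaced by an $\mathbb{L}$-sensitive construction---approximating the supremum by countably many admissible vectors, localising on $\mathcal{F}$-sets where the relevant $\mathbb{L}$-valued discrepancy is controlled, and exploiting the supremum representation of $\|\cdot\|_{p}$. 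The remaining technical effort lies in justifying the interchanges of $\lim$ and $\int_{S}$ through Theorem 2.6 and in verifying that the functions produced along the way are genuinely $\mathbb{L}$-$\mu$-measurable.
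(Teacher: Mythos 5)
Your stages (i) and (ii) track the paper closely: the paper's Lemma 3.5 establishes well-definedness and $\|F_v\|\le\|v\|_q$ exactly as you do (measurability of $\langle u,v\rangle$ via $\mathbb{L}$-simple approximants plus the H\"older inequality of Proposition 2.4), and its Lemma 3.6 carries out your surjectivity argument verbatim --- the vector measure $G(E)(x)=H(xI_E)$, the estimate $\|G(E)\|_{X^*}\le\|H\|\,\mu(E)^{1/p}$ giving $\mu$-continuity, the bound $|G|(S)\le\|H\|\mu(S)^{1/p}$ for bounded variation, the RNP to produce $v\in L^1(\mu,X^*)$, and agreement with $H$ first on $\mathbb{L}$-simple functions and then by density. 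The genuine gap is in your third stage, the lower bound $\|v\|_q\le\|F_v\|$, which you reduce to a pointwise norm-attaining selection $x(t)$ with $\|x(t)\|\le 1$ and $\langle x(t),g(t)\rangle\approx\|g(t)\|_{X^*}$ and then explicitly concede is unavailable because $\|\cdot\|_{X^*}$ is an $\mathbb{L}$-valued supremum; the ``$\mathbb{L}$-sensitive construction'' you gesture at is never supplied, and this is precisely the hardest step of the theorem. The paper's route is different and avoids selection altogether: for $1<p<\infty$ it tests $F_v$ against the functions $u_n(t)=\|v(t)\|_{X^*}^{\frac1p+\frac1{p^2}+\cdots+\frac1{p^n}}$ (formally divided by $v(t)$), obtains the recursive estimate $\int_S\|v(t)\|_{X^*}^{1+\frac1p+\cdots+\frac1{p^n}}d\mu\le\|F_v\|^{1+\frac1p+\cdots+\frac1{p^n}}\mu(S)^{1/p^{n+1}}$, and passes to the limit using $\sum_{k\ge0}p^{-k}=q$ together with the dominated convergence theorem (Theorem 2.6); for $p=1$ it runs a separate argument on the sets $E_\varepsilon=\{t:\|v(t)\|_{X^*}>\|F_v\|+\varepsilon\}$ to get $\|v\|_\infty\le\|F_v\|$. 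Whatever one thinks of the rigor of ``dividing by $v(t)$'', that exponent-bootstrapping device is the idea your proposal is missing.

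There is also a logical slip in how you assemble the isometry. You prove $\|g\|_q\le\|\Phi\|$ only for the Radon--Nikod\'ym derivative $g$ produced by the surjectivity construction, and then assert $\|F_v\|=\|v\|_q$ for \emph{every} $v\in L^q(\mu,X^*)$. To transfer the inequality from $g$ to an arbitrary $v$ with $F_v=\Phi$ you must identify $v$ with $g$ $\mu$-a.e., i.e.\ you need injectivity of $F$ (equivalently, uniqueness of the representing function) --- but you obtain injectivity \emph{from} the isometry, so the argument as stated is circular. The fix is either to run your lower-bound argument directly for an arbitrary $v$ (nothing in it uses that $g$ came from the RNP), which is what the paper does by placing the full isometry in Lemma 3.5 before surjectivity is ever discussed, or to prove separately that $F_w=0$ forces $w=0$ a.e.
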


For the sake of clearness, the proof is divided into the two Lemmas below. Lemma 3.5 claims that $F$ is well-defined and $\mathbb{L}$-norm preserving; Lemma 3.6 proves that $F$ is surjective by the RNP of $X^{*}.$

\begin{lemma}
$F$ is well-defined and isometric.
\end{lemma}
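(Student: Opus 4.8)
The plan is to verify four things in turn, the first three amounting to ``$F$ is well-defined'' and all four together to ``$F$ is isometric'' (whence $F$ is automatically injective): (i) for every $v\in L^{q}(\mu,X^{*})$ and every $u\in L^{p}(\mu,X)$ the $\mathbb{L}$-valued function $t\mapsto\langle u(t),v(t)\rangle=v(t)(u(t))$ is $\mathbb{L}$-Bochner integrable, so that $F_{v}$ is genuinely defined on all of $L^{p}(\mu,X)$ (and, since the integral is unaffected by modifying $u$ or $v$ on a $\mu$-null set, it descends to the equivalence classes); (ii) $F_{v}$ is $\mathbb{L}$-linear and bounded with $\|F_{v}\|\le\|v\|_{q}$, so $F_{v}\in(L^{p}(\mu,X))^{*}$; (iii) $v\mapsto F_{v}$ is $\mathbb{L}$-linear; and (iv) the reverse estimate $\|F_{v}\|\ge\|v\|_{q}$.

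For (i)--(iii) I would work through $\mathbb{L}$-simple functions, which I take to be $\|\cdot\|_{p}$-dense in $L^{p}(\mu,X)$ and $\|\cdot\|_{q}$-dense in $L^{q}(\mu,X^{*})$ for finite exponents (a consequence of the approximation machinery of Section~2). When $u$ and $v$ are $\mathbb{L}$-simple, $\langle u,v\rangle$ is an $\mathbb{L}$-simple, $\mathbb{L}$-valued function; in general $\langle u,v\rangle(t)=\sum_{j}v(t)(x_{j})\,I_{B_{j}}(t)$ or $\sum_{i}\psi_{i}(u(t))\,I_{A_{i}}(t)$, hence at least $\mathbb{L}$-$\mu$-measurable, and the pointwise bound $|\langle u(t),v(t)\rangle|\le\|u(t)\|\,\|v(t)\|$ together with Proposition~2.4 ($H\ddot{o}lder$), which puts $t\mapsto\|u(t)\|\,\|v(t)\|$ in $L^{1}(\mu,\mathbb{L})$, yields the $\mathbb{L}$-Bochner integrability of $\langle u,v\rangle$. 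For $1<q<+\infty$, choose $\mathbb{L}$-simple $v_{n}\to v$ in $\|\cdot\|_{q}$; then $\|\langle u,v_{n}\rangle-\langle u,v\rangle\|_{1}\le\|u\|_{p}\,\|v_{n}-v\|_{q}\to0$ by Proposition~2.4, so $\langle u,v\rangle$ is an $L^{1}(\mu,\mathbb{L})$-limit of the $\mathbb{L}$-simple functions $\langle u,v_{n}\rangle$ and $F_{v}(u)=\lim_{n}\int_{S}\langle u,v_{n}\rangle\,d\mu$; $\mathbb{L}$-linearity of $F_{v}$ and of $F$ is inherited from bilinearity of the pairing and of the $\mathbb{L}$-Bochner integral, and $|F_{v}(u)|\le\int_{S}|\langle u,v\rangle|\,d\mu\le\int_{S}\|u\|\,\|v\|\,d\mu\le\|u\|_{p}\|v\|_{q}$ gives $\|F_{v}\|\le\|v\|_{q}$. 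The case $q=+\infty$ requires separate care, since simple functions need not be $\|\cdot\|_{\infty}$-dense; there one instead keeps $v$ fixed, approximates $u$ by $\mathbb{L}$-simple $u_{n}\to u$ in $\|\cdot\|_{p}$, and uses $\|\langle u_{n},v\rangle-\langle u,v\rangle\|_{1}\le C\,\|u_{n}-u\|_{p}\,\|v\|_{\infty}\to0$ with $C$ depending only on $\mu(S)$ and $p$.

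The reverse estimate (iv) is the substantive, genuinely $\mathbb{L}$-flavoured step, since — as the introduction stresses — one cannot take ``signs'' of $X$-valued functions and must instead norm $v(t)$ by vectors in $X$. Fix $v$ and $\varepsilon\in\mathbb{L}^{+}$ (or a family $\mathcal{E}\downarrow0$); the goal is $u\in L^{p}(\mu,X)$ with $\|u\|_{p}\le1$ and $F_{v}(u)$ exceeding $\|v\|_{q}$ up to an $\mathbb{L}^{+}$-error tending to $0$, built on the classical template $u(t)=\|v(t)\|^{q-1}x(t)$ up to normalisation (and, when $q=+\infty$, with $u$ supported on $\{\|v\|\ge c\}$ for $c\in\mathbb{L}^{+}$ and $x(t)$ of $\mathbb{L}$-norm $\le1$), where $x(t)\in X$ satisfies $\|x(t)\|\le1$ and $v(t)(x(t))\ge\|v(t)\|-\varepsilon$. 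I would establish this first for $\mathbb{L}$-simple $v=\sum_{j=1}^{n}\psi_{j}I_{B_{j}}$ — where only finitely many norming vectors $x_{j}$ are needed, so no measurability issue arises — and then pass to general $v$ via the $\|\cdot\|_{q}$-approximation and the continuity of $F$ from (ii). The main obstacle I anticipate is precisely this last reduction: for non-simple $v$ the selection $t\mapsto x(t)$ must be $\mathbb{L}$-$\mu$-measurable, an $\mathbb{L}$-valued measurable-selection problem that routing through $\mathbb{L}$-simple $v$ is designed to circumvent, while in the case $q=+\infty$ the absence of $\|\cdot\|_{\infty}$-density of simple functions forces one to test $F_{v}$ against functions supported on the sets $\{\|v\|\ge c\}$ and then let $c\downarrow0$. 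Everything else reduces to the structural properties of $\mathbb{L}$-Bochner integrals from Section~2 together with Propositions~2.4 and~2.5.
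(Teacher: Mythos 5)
Your treatment of well-definedness and of the inequality $\|F_{v}\|\leq\|v\|_{q}$ matches the paper's: approximate $v$ by $\mathbb{L}$-simple functions to get measurability of $\langle u,v\rangle$, then apply Proposition~2.4. For the reverse inequality $\|v\|_{q}\leq\|F_{v}\|$, however, you take a genuinely different route. You follow the classical template: norm $v(t)$ by vectors $x(t)\in X$ with $v(t)(x(t))\geq\|v(t)\|_{X^{*}}-\varepsilon$, do this first for $\mathbb{L}$-simple $v$ (finitely many norming vectors, no selection issue), and pass to general $v$ by $\|\cdot\|_{q}$-density and continuity of $v\mapsto F_{v}$. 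The paper instead runs an iterative bootstrap: it tests $F_{v}$ against functions informally written as $\|v(t)\|_{X^{*}}^{\alpha_{n}}/v(t)$ with $\alpha_{n}=\frac{1}{p}+\cdots+\frac{1}{p^{n}}$, obtaining $\int_{S}\|v\|_{X^{*}}^{1+\alpha_{n}}d\mu\leq\|F_{v}\|^{1+\alpha_{n}}\mu(S)^{1/p^{n+1}}$, and lets $n\to\infty$ using $\sum_{n\geq0}p^{-n}=q$ together with Theorem~2.6; the case $p=1$ is handled by the same power estimates plus a Chebyshev-type argument on $E_{\varepsilon}=\{\|v\|_{X^{*}}>\|F_{v}\|+\varepsilon\}$. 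The paper's iteration has the advantage that it simultaneously \emph{upgrades integrability} of $\|v\|_{X^{*}}$ from exponent $1$ towards $q$, which is exactly what is reused in Lemma~3.6 where the RNP only delivers $v\in L^{1}(\mu,X^{*})$; your density argument presupposes $v\in L^{q}$ from the start and would not transfer to that step.

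One caveat on your step (iv): in the $\mathbb{L}$-setting, $\|\psi\|_{X^{*}}$ is a supremum taken in the Dedekind complete lattice $\mathbb{L}$, and a supremum in a lattice need not be approximated up to a prescribed $\varepsilon\in\mathbb{L}^{+}$ by a \emph{single} element $|\psi(x)|$ unless the set $\{|\psi(x)|:\|x\|\leq1\}$ is upward directed; so even for $\mathbb{L}$-simple $v$ the existence of your norming vectors $x_{j}$ requires an argument (or a passage to finite suprema over finitely many test vectors combined via band projections). This is not a reason to prefer the paper's proof --- its formal division by $v(t)$ faces the same attainment issue --- but it is a gap you would need to close to make your route rigorous.
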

\begin{proof}
Let $v\in L^{q}(\mu,X^{*})$ and without loss of generality, we assume that $v$ is not 0 everywhere. According to the definition of the $\mathbb{L}$-Bochner integrability and Lemma 2.7, there is a sequence of $\mathbb{L}$-simple functions $(v_{n})_{n\in N}\subseteq L^{q}(\mu,X^{*})$ satisfying that there is a subsequence $(v_{n_{k}})_{k\in N}\subseteq(v_{n})_{n\in N}$ such that
$
v_{n_{k}}\stackrel{a.e.}{\longrightarrow} v.
$
Obviously, $\langle u,v_{n_{k}}\rangle$ is measurable for each $u\in L^{p}(\mu,X).$ Moreover, we can obtain without difficulty that
$
v_{n_{k}}(t)\stackrel{a.e.}{\longrightarrow}v(t)\in X^{*}
$
for any $t\in S,$ and then
$$
v_{n_{k}}(t)\left(u(t)\right)\stackrel{a.e.}{\longrightarrow}v(t)\left(u(t)\right)\in \mathbb{L},
$$
i.e.,
$
\lim_{k}\langle u,v_{n_{k}}\rangle=\langle u,v\rangle, a.e.,
$
which implies that $\langle u,v\rangle$ is measurable. Besides, since
$$
\left||\langle u,v_{n_{k}}\rangle|-|\langle u,v\rangle|\right|\leq|\langle u,v_{n_{k}}\rangle-\langle u,v\rangle|=\left|\langle u,v_{n_{k}}-v\rangle\right|,
$$
it follows that $|\langle u,v\rangle|$ is also measurable. According to Proposition 2.4, one can obtain that
\[
|F_{v}(u)|=\left|\int_{S}\langle u,v\rangle d\mu\right|\leq\int_{S}|\langle u,v\rangle| d\mu\leq\|u\|_{p}\|v\|_{q}.
\]
Therefore, $F$ is well-defined, $F_{v}\in\left(L^{p}(\mu,X)\right)^{*}$ and $\|F_{v}\|\leq\|v\|_{q}.$

As for
$
\|v\|_{q}\leq\|F_{v}\|,
$
we will divide it into $1<p<+\infty$ and $p=1$ two cases.

When $1<p<+\infty,$ let
$u_{1}(t)=\|v(t)\|_{X^{*}}^{\frac{1}{p}}\in \mathbb{L}$ for each $t\in S.$
According to the $\mathbb{L}$-Bochner integrability of $v$ and the continuity of $\|\cdot\|,$ $u_{1}$ is $\mathbb{L}$-$\mu$-measurable, then $u_{1}\in L^{p}(\mu,\mathbb{L}).$ Consequently,
$$
\begin{aligned}
\int_{S}\|v(t)\|_{X^{*}}^{1+\frac{1}{p}}d\mu(t)
&=\int_{S}\|v(t)\|_{X^{*}}\|v(t)\|_{X^{*}}^{\frac{1}{p}}d\mu(t)\\
&=F_{v}\left(\frac{\|v(t)\|_{X^{*}}\|v(t)\|_{X^{*}}^{\frac{1}{p}}}{v(t)}\right)\\
&\leq\|F_{v}\|\left\|\frac{\|v(t)\|_{X^{*}}\|v(t)\|_{X^{*}}^{\frac{1}{p}}}{v(t)}\right\|_{p}\\
&\leq\|F_{v}\|\left(\int_{S}\|v(t)\|_{X^{*}}d\mu(t)\right)^{\frac{1}{p}}\\
&=\|F_{v}\|\left(F_{v}\left(\frac{\|v(t)\|_{X^{*}}}{v(t)}\right)\right)^{\frac{1}{p}}\\
&\leq\|F_{v}\|^{1+\frac{1}{p}}\left(\mu(S)\right)^{\frac{1}{p^{2}}}.
\end{aligned}
$$
Let
$
u_{2}(t)=\|v(t)\|_{X^{*}}^{\frac{1}{p}+\frac{1}{p^{2}}}\in \mathbb{L}
$ for each $t\in S,$
then one can similarly obtain $u_{2}\in L^{p}(\mu,\mathbb{L}).$ Thus, we get
$$
\begin{aligned}
\int_{S}\|v(t)\|_{X^{*}}^{1+\frac{1}{p}+\frac{1}{p^{2}}}d\mu(t)
&=\int_{S}\|v(t)\|_{X^{*}}\|v(t)\|_{X^{*}}^{\frac{1}{p}+\frac{1}{p^{2}}}d\mu(t)\\
&=F_{v}\left(\frac{\|v(t)\|_{X^{*}}\|v(t)\|_{X^{*}}^{\frac{1}{p}+\frac{1}{p^{2}}}}{v(t)}\right)\\
&\leq\|F_{v}\|\left(\int_{S}\|v(t)\|_{X^{*}}^{1+\frac{1}{p}}d\mu(t)\right)^{\frac{1}{p}}\\
&=\|F_{v}\|^{1+\frac{1}{p}+\frac{1}{p^{2}}}\left(\mu(S)\right)^{\frac{1}{p^{3}}}.
\end{aligned}
$$
Analogously, let
$
u_{n}(t)=\|v(t)\|_{X^{*}}^{\frac{1}{p}+\frac{1}{p^{2}}+\cdots+\frac{1}{p^{n}}}\in \mathbb{L}
$ for each $t\in S.$
Then
\begin{equation}
\int_{S}\|v(t)\|_{X^{*}}^{1+\frac{1}{p}+\frac{1}{p^{2}}+\cdots+\frac{1}{p^{n}}}d\mu(t)\leq\|F_{v}\|^{1+\frac{1}{p}+\frac{1}{p^{2}}+\cdots+\frac{1}{p^{n}}}\left(\mu(S)\right)^{\frac{1}{p^{n+1}}}
\end{equation}
for any $n\in N.$ Observing that $\sum\limits_{n=0}\limits^{\infty}\frac{1}{p^{n}}=q$ when $p>1$ and $\|u_{n}\|\leq\|v\|^{q}\in \mathbb{L}.$ Consequently, it follows from Theorem 2.6 that
$
\|v\|_{q}\leq\|F_{v}\|.
$

When $p=1,$ set
$
E_{\varepsilon}=\left\{t\in S ~|~ \|v(t)\|_{X^{*}}>\|F_{v}\|+\varepsilon\right\}
$
for any positive number $\varepsilon.$
It follows from (3.1) that
$
\int_{S}\|v(t)\|_{X^{*}}^{n+1}d\mu(t)\leq\|F_{v}\|^{n+1}\mu(S).
$
Consequently,
$
\mu\left(E_{\varepsilon}\right)\left(\|F_{v}\|+\varepsilon\right)^{n+1}\leq\|F_{v}\|^{n+1}\mu(S),
$
and further we can obtain that
\[
\left(\frac{\mu\left(E_{\varepsilon}\right)}{\mu(S)}\right)^{\frac{1}{n+1}}\leq\frac{\|F_{v}\|}{\|F_{v}\|+\varepsilon}<1.
\]
Taking $n\rightarrow\infty$ in the above inequality, we have $\mu\left(E_{\varepsilon}\right)=0,$ which indicates that $\|v\|_{\infty}\leq\|F_{v}\|.$
\end{proof}
\begin{lemma}
$F$ is surjective, namely, for any $H\in \left(L^{p}(\mu,X)\right)^{*},$ there is $v\in L^{q}(\mu,X^{*})$ satisfying $F_{v}=H.$
\end{lemma}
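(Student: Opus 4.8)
The plan is to adapt the classical argument for the Riesz representation of $L^{p}(\mu,X)^{*}$: attach to $H$ an $X^{*}$-valued measure $G$ on $\mathcal{F}$, invoke the RNP of $X^{*}$ to get an $L^{1}$-density $v$, recognize $H=F_{v}$ on $\mathbb{L}$-simple functions, and finally promote $v$ from $L^{1}(\mu,X^{*})$ to $L^{q}(\mu,X^{*})$ by a truncation argument resting on Lemma 3.5. Concretely, first put $G(E)(x):=H(xI_{E})$ for $E\in\mathcal{F}$, $x\in X$; this is $\mathbb{L}$-linear in $x$, and from $|G(E)(x)|=|H(xI_{E})|\le\|H\|\,\|x\|\,\mu(E)^{1/p}$ one gets $G(E)\in X^{*}$ with $\|G(E)\|_{X^{*}}\le\|H\|\mu(E)^{1/p}$. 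Since $\mu$ is finite and countably additive, the same estimate shows that $G$ is countably additive in the $\mathbb{L}$-norm of $X^{*}$ and that $G\ll\mu$ in the sense of Definition 3.2.

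The delicate hypothesis is that $G$ be of bounded variation, for which the naive bound $\sum_{B\in\mathcal{N}}\|G(B)\|_{X^{*}}\le\|H\|\sum_{B\in\mathcal{N}}\mu(B)^{1/p}$ is useless when $p>1$. For a finite partition $\mathcal{N}=\{B_{1},\dots,B_{n}\}$ of $S$ and small $\varepsilon$, I would instead pick $x_{i}\in X$ with $\|x_{i}\|\le\mathbf{1}$ and $|G(B_{i})(x_{i})|\ge\|G(B_{i})\|_{X^{*}}-\varepsilon$ (an $\mathbb{L}$-level approximation of the dual $\mathbb{L}$-norm, using local gluing in $X$), choose $\theta_{i}\in\mathbb{L}$ with $|\theta_{i}|\le\mathbf{1}$ and $\theta_{i}G(B_{i})(x_{i})=|G(B_{i})(x_{i})|$, and set $f=\sum_{i=1}^{n}\theta_{i}x_{i}I_{B_{i}}$, so that $\|f(t)\|\le\mathbf{1}$ and $\|f\|_{p}\le\mu(S)^{1/p}$. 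Then $\sum_{i}\|G(B_{i})\|_{X^{*}}\le\sum_{i}|G(B_{i})(x_{i})|+n\varepsilon=H(f)+n\varepsilon\le\|H\|\mu(S)^{1/p}+n\varepsilon$, and letting $\varepsilon$ run through a set decreasing to $0$ shows $|G|(S)\le\|H\|\mu(S)^{1/p}$ exists in $\mathbb{L}$. Now the RNP of $X^{*}$ delivers $v\in L^{1}(\mu,X^{*})$ with $G(E)=\int_{E}v\,d\mu$ for all $E$; evaluating at $x\in X$ (the $\mathbb{L}$-Bochner integral commutes with the bounded $\mathbb{L}$-linear evaluation map) gives $H(xI_{E})=\int_{E}\langle x,v\rangle\,d\mu$, hence $H(u)=\int_{S}\langle u,v\rangle\,d\mu=F_{v}(u)$ for every $\mathbb{L}$-simple $u$. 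Since $\mathbb{L}$-simple functions are $\|\cdot\|_{p}$-dense in $L^{p}(\mu,X)$ (approximate an $\mathbb{L}$-Bochner integrable $u$ a.e.\ by $\mathbb{L}$-simple functions, truncate so the approximants are dominated by a multiple of $\|u\|$, and apply Theorem 2.6), it follows that $H=F_{v}$ on all of $L^{p}(\mu,X)$ as soon as $v\in L^{q}(\mu,X^{*})$, i.e.\ as soon as $F_{v}$ is a well-defined continuous functional.

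It remains to prove $v\in L^{q}(\mu,X^{*})$, and this is the step I expect to be hardest, since the RNP only produced an $L^{1}$-density and $X^{*}$-valued functions cannot be replaced by their absolute values. Set $E_{n}=\{t\in S:\|v(t)\|_{X^{*}}\le n\}$ (so $E_{n}\uparrow S$ off a $\mu$-null set, as $\|v\|_{X^{*}}\in L^{1}$) and $v_{n}=vI_{E_{n}}$; each $v_{n}$ is bounded, hence $v_{n}\in L^{q}(\mu,X^{*})$, and for $\mathbb{L}$-simple $u$ one has $F_{v_{n}}(u)=\int_{S}\langle uI_{E_{n}},v\rangle\,d\mu=H(uI_{E_{n}})$, which extends to all $u\in L^{p}(\mu,X)$ by density; hence $|F_{v_{n}}(u)|=|H(uI_{E_{n}})|\le\|H\|\,\|uI_{E_{n}}\|_{p}\le\|H\|\,\|u\|_{p}$, so $\|F_{v_{n}}\|\le\|H\|$. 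Lemma 3.5 applied to $v_{n}$ then gives $\|v_{n}\|_{q}\le\|F_{v_{n}}\|\le\|H\|$ for every $n$; since $\|v_{n}(\cdot)\|_{X^{*}}\uparrow\|v(\cdot)\|_{X^{*}}$ pointwise, Theorem 2.6 (monotone convergence) yields $\|v\|_{q}\le\|H\|<+\infty$ when $q<\infty$, while for $q=\infty$ the a.e.\ bounds $\|v_{n}\|_{X^{*}}\le\|F_{v_{n}}\|\le\|H\|$ on each $E_{n}$ give $\|v\|_{\infty}\le\|H\|$ at once. Thus $v\in L^{q}(\mu,X^{*})$ and $F_{v}=H$, which proves surjectivity (and, together with Lemma 3.5, that $F$ is an isometric isomorphism, so Theorem 3.4 follows). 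The two essentially non-classical difficulties are the bounded-variation verification — which forces the single test-function ``$\mathbb{L}$-sign'' device in place of choosing norming vectors partition-set by partition-set — and the $L^{1}\to L^{q}$ upgrade of the density, both traceable to the impossibility of taking moduli of $X^{*}$-valued functions and of realizing the $\mathbb{L}$-norm of a functional by a single vector.
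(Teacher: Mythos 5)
Your proposal follows essentially the same route as the paper: build the $X^{*}$-valued measure $G(E)(x)=H(xI_{E})$, check $\mu$-continuity and bounded variation, invoke the RNP of $X^{*}$ to get $v\in L^{1}(\mu,X^{*})$, identify $H$ with $F_{v}$ on $\mathbb{L}$-simple functions, and extend by density. You are in fact more careful than the paper at two points: for bounded variation the paper simply interchanges the finite sum with the suprema $\sup_{\|x_{j}\|\le 1}$ and inserts a ``sgn'' factor, whereas you do the $\varepsilon$-bookkeeping explicitly with a single test function $f=\sum\theta_{i}x_{i}I_{B_{i}}$; and for the upgrade $v\in L^{1}\Rightarrow v\in L^{q}$ the paper only says ``can be proved similarly to Lemma 3.5,'' whereas you supply an explicit truncation. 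One caveat on that last step: the paper's intended argument is apparently to rerun the Lemma 3.5 bootstrap (the iterated estimate $\int\|v\|_{X^{*}}^{1+1/p+\cdots+1/p^{n}}d\mu\le\|H\|^{1+1/p+\cdots+1/p^{n}}\mu(S)^{1/p^{n+1}}$) directly on $v$, which needs only $v\in L^{1}$ to get started. Your alternative via $E_{n}=\{t:\|v(t)\|_{X^{*}}\le n\}$ is the classical one, but in the $\mathbb{L}$-valued setting $\|v(t)\|_{X^{*}}$ is an element of $\mathbb{L}\subseteq C_{\infty}(Z)$ and need not be dominated by any scalar multiple $n\cdot 1$ of the unit, so $\bigcup_{n}E_{n}$ may fail to exhaust $S$ up to a null set; you would either need to justify that exhaustion or replace the scalar truncation by the paper's bootstrap. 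Apart from this one point (and the ``local gluing'' approximation of the dual $\mathbb{L}$-norm, which you flag and which the paper also uses without proof), the argument is sound and matches the paper's.
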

\begin{proof}
Define a mapping $G:S\rightarrow X^{*}$ by
$
G(F)=H(I_{F}), \forall F\in \mathcal{F}.
$

It is simple to verify that $G(\emptyset)=0$ and $G$ is a countably additive measure.

For any $(F_{m})_{m\in N}\subseteq\mathcal{F}$ satisfying $F_{m}\downarrow\emptyset,$
$$
\|G(F_{m})\|_{X^{*}}=\sup_{\|x\|\leq 1}|G(F_{m})(x)|=\sup_{\|x\|\leq 1}|H(xI_{F_{m}})|\leq\|H\|\|I_{F_{m}}\|_{p},
$$
thus
$
\lim_{m}G(F_{m})=0,
$
which suggests that $G$ is $\mu$-continuous.

Moreover, for each $\mathcal{N}=\left\{B_{j} ~|~ j=1,2,\ldots n, n\in N\right\}$ which is any partition of $S$ and divides $S$ into finite mutually disjoint elements of $\mathcal{F}$, one has
$$
\begin{aligned}
\sum_{j=1}^{n}\|G(B_{j})\|_{X^{*}}
&=\sum_{j=1}^{n}\sup_{\|x_{j}\|\leq 1}|G(B_{j})(x_{j})|\\
&=\sum_{j=1}^{n}\sup_{\|x_{j}\|\leq 1}|H(x_{j}I_{B_{j}})|\\
&=\sup_{\|x_{j}\|\leq 1}H\left(\sum_{j=1}^{n}x_{j}I_{B_{j}}sgn\left(H(x_{j}I_{B_{j}})\right)\right)\\
&\leq\|H\|\left(\mu(S)\right)^{\frac{1}{p}}.
\end{aligned}
$$
Thus,
$
|G|(S)=\sup\limits_{\mathcal{N}}\sum\limits_{B_{j}\in\mathcal{N}}\|G(B_{j})\|_{X^{*}}\leq\|H\|\left(\mu(S)\right)^{\frac{1}{p}}
$
exists in $\mathbb{L},$
which indicates that $G$ is of bounded variation.

Since $X^{*}$ possesses the RNP, it follows that there is $v\in L^{1}(\mu,X^{*})$ satisfying
$G(F)=\int_{F}vd\mu$ for any $F\in\mathcal{F}.
$
Consequently,
\[
G(F)(x)=\int_{F}v(t)(x)d\mu(t)=H(xI_{F}), \forall F\in\mathcal{F} \ and \ x\in X.
\]

For each $\mathbb{L}$-simple function $u\in L^{p}(\mu,X)$, there exist $x_{1},x_{2},\cdots x_{n}\in X$ and mutually disjoint $S_{1},S_{2},\cdots S_{n}\in S$ satisfying $u=\sum\limits_{j=1}\limits^{n}x_{j}I_{S_{j}},$ where $n\in N.$ Thus
$$
H(u)=\sum\limits_{j=1}\limits^{n}\int_{S}v(t)\left(x_{j}I_{S_{j}}(t)\right)d\mu(t)
=\int_{S}v(t)\left(\sum\limits_{j=1}\limits^{n}x_{j}I_{S_{j}}(t)\right)d\mu(t)
=\int_{S}\langle u,v\rangle d\mu.
$$

For any $u\in L^{p}(\mu,X),$ there is a sequence of $\mathbb{L}$-simple functions $(u_{n})_{n\in N}\subseteq L^{p}(\mu,X)$ satisfying
$u_{n}$ converges to $u,$ a.e..
Then clearly
$
\langle u_{n},v\rangle\stackrel{a.e.}{\longrightarrow}\langle u,v\rangle\in \mathbb{L}.
$
Moreover, $v\in L^{q}(\mu,X^{*})$ can be proved similarly to Lemma 3.5. Besides, according to Proposition 2.4,
$$
\begin{aligned}
\left|\int_{S}\langle u_{n},v\rangle-\langle u,v\rangle d\mu\right|&\leq\int_{S}|\langle u_{n},v\rangle-\langle u,v\rangle|d\mu\\
&=\int_{S}|\langle u_{n}-u,v\rangle|d\mu\\
&\leq\|u_{n}-u\|_{p}\|v\|_{q},
\end{aligned}
$$
which implies that
$
\lim_{n}\int_{S}\langle u_{n},v\rangle d\mu=\int_{S}\langle u,v\rangle d\mu.
$
Consequently, one can immediately obtain
\[
H(u)=\lim_{n\rightarrow\infty}H(u_{n})=\lim_{n\rightarrow\infty}\int_{S}\langle u_{n},v\rangle d\mu=\int_{S}\langle u,v\rangle d\mu
\]
for each $u\in L^{p}(\mu,X),$  which suggests that $F_{v}=H.$
\end{proof}
\begin{remark}
Particularly, if we take $\mathbb{L}=\mathbb{R}$ in Theorem 3.4, then $X$ is exactly an ordinary Banach space and for the conjugate space of $L^{p}(\mu,X)$ as the space of Lebesgue-Bochner functions, we have $\left(L^{p}(\mu,X)\right)^{*}=L^{q}(\mu,X^{*}).$

Besides, let $(X,\|\cdot\|)=(\mathbb{R},|\cdot|),$ then Theorem 3.4 generalizes the corresponding classical result since $L^{p}(\mu,X)$ reduces to the space of $\mathbb{R}$-valued Lebesgue-measurable functions.
\end{remark}

\bibliographystyle{amsplain}

\end{document}